\newtheorem{Lemma}{Lemma}
\newtheorem{Theorem}{Theorem}
\newtheorem{Definition}{Definition}
\newtheorem{Problem}{Problem}
\newtheorem{Corollary}{Corollary}
\newtheorem{Remark}{Remark}
\numberwithin{Subcase}{Case}
\newcommand{\Eu}{\mathbb{E}}
\newcommand{\N}{\mathbb{N}}
\newcommand{\BB}{\mathbf{B}}
\renewcommand{\P}{\mathcal{P}}
\newcommand{\F}{\mathcal{F}}
\DeclareMathOperator{\conv}{conv}
\DeclareMathOperator{\area}{area}
\DeclareMathOperator{\bd}{bd}
\DeclareMathOperator{\inter}{int}
\DeclareMathOperator{\cl}{cl}
\begin{document}

\title{Bounds for totally separable translative packings in the plane}

\author{K\'{a}roly Bezdek\thanks{Partially supported by a Natural Sciences and
Engineering Research Council of Canada Discovery Grant} and Zsolt L\'angi\thanks{Partially supported by the National Research, Development and Innovation Office, NKFI, K-119670}}
\date{}
 \maketitle

\begin{abstract}
A packing of translates of a convex domain in the Euclidean plane is said to be totally separable if any two packing elements can be separated by a line disjoint from the interior of every packing element. This notion was introduced by G. Fejes T\'{o}th and L. Fejes T\'{o}th (1973) and has attracted significant attention. In this paper we prove an analogue of Oler's inequality for totally separable translative packings of convex domains and then we derive from it some new results. This includes finding the largest density of totally separable translative packings of an arbitrary convex domain and finding the smallest area convex hull of totally separable packings (resp., totally separable soft packings) generated by given number of translates of a convex domain (resp., soft convex domain). Finally, we determine the largest covering ratio (that is, the largest fraction of the plane covered by the soft disks) of an arbitrary totally separable soft disk packing with given soft parameter.
\vspace{2mm}

\noindent \textit{Keywords and phrases:} translative packing, totally separable packing, finite packing, soft domain, soft packing, Oler's inequality, (mixed) area, density, covering ratio.

\vspace{2mm}

\noindent \textit{MSC (2010):} (Primary) 52C17, 52C15, (Secondary) 52C10.
\end{abstract}

\section{Introduction}\label{intro}

Our paper intends to bridge totally separable packings of discrete geometry and Oler's inequality of geometry of numbers. The concept of totally separable packings was introduced by G. Fejes T\'oth and L. Fejes T\'oth in \cite{FeFe} as follows. We say that a set of domains is totally separable if any two of them can be separated by a straight line avoiding all of the domains. The main question investigated in \cite{FeFe} is to find the densest totally separable arrangement of congruent replicas of a given domain. The paper \cite{FeFe} generated a good deal of interest in the density problem of totally separable arrangements and led to further important publications such as  \cite{Andras} and \cite{Ke}. Coming from this direction our goal was to find the densest totally separable arrangement of translates of a given domain and then to extend that approach to the analogue question for finite totally separable arrangements. It turned out that an efficient method to achieve all that is based on a new version of Oler's classical inequality (\cite{Oler}). So, next we introduce some basic terminology and then state Oler's inequality in the form which is most suitable for this paper.

Let $K$ be a {\it convex domain}, i.e., a compact convex set with non-empty interior in the Euclidean plane $\Eu^2$. A family $\mathcal{F}$ of $n$ translates of $K$ in $\Eu^2$ is called a {\it packing} if no two members of $\mathcal{F}$ have an interior point in common.

If $K$ is an $o$-symmetric convex domain in $\Eu^2$, where $o$ stands for the origin of $\Eu^2$, then let $| \cdot |_{K}$ denote the {\it norm generated by $K$}, i.e., let $|x|_{K} = \min\{ \lambda : x \in \lambda K\}$ for any $x\in \Eu^2$. The distance between the points $p$ and $q$ measured in the norm $| \cdot |_{K}$ is denoted by $|p-q|_K$. For the sake of simplicity, the Euclidean distance between the points $p$ and $q$ of $\Eu^2$ is denoted by $|p-q|$.

If $P = \bigcup_{i=1}^n [x_{i-1},x_i]$ is a polygonal curve in $\Eu^2$, and $K$ is an $o$-symmetric plane convex domain, then the {\it Minkowski length} of $P$ is defined as $M_K(P) = \sum_{i=1}^n |x_i-x_{i-1}|_K$. Based on this and using approximation by closed polygons one can define the Minkowski length $M_K(G)$ of any rectifiable curve $G \subseteq \Eu^2$ in the norm $| \cdot |_{K}$. If $K$ is a not $o$-symmetric, by $M_K(G)$ we mean the length of $G$ in the \emph{relative norm} of $K$, i.e., in the norm defined by $\frac{1}{2}(K-K)$ \cite{Oler}.

Finally, if $K$ is an $o$-symmetric convex domain in $\Eu^2$, then let $\diamond(K)$ denote a minimal area circumscribed hexagon of $K$.

Now, we are ready to state Oler's inequality (\cite{Oler}) in the following form. Let $K$ be an $o$-symmetric convex domain in $\Eu^2$. Let $$\mathcal{F} = \{ x_i + K : i=1,2,\ldots, n\}$$ be a packing of $n$ translates of $K$ in $\Eu^2$, and set $X = \{ x_1, x_2, \ldots, x_n\}$. Furthermore, let $\Pi$ be a simple closed polygonal curve with the following properties:  \begin{enumerate}
\item the vertices of $\Pi$ are points of $X$

and

\item $X \subseteq \Pi^*$ with $\Pi^* = \Pi \cup \inter \Pi$, where $\inter \Pi$ refers to the interior of $\Pi$.
\end{enumerate}
Then
\begin{equation}\label{eq:Oler-original}
\frac{\area (\Pi^*)}{\area \left(\diamond (K)\right)} + \frac{M_K(\Pi)}{4} + 1 \geq n,
\end{equation}
where $\area(\cdot)$ denotes the area of the corresponding set. The formula (\ref{eq:Oler-original}) was conjectured by H. J. Zassenhaus and has a number of interesting aspects discussed in \cite{Z} (see also \cite{BetkeHenkWills} and \cite{BoRu}).

The rest of the paper is organized as follows. First, we prove an analogue of Oler's inequality for totally separable translative packings of convex domains (Section~\ref{1}) and then we derive from it some new results. This includes finding the largest density of totally separable translative packings of an arbitrary convex domain (Section~\ref{2}) and finding the smallest area convex hull of totally separable packings (resp., totally separable soft packings) generated by given number of translates of a convex domain (resp., soft convex domain) (Sections~\ref{3} and~\ref{4}). Finally, we determine the largest covering ratio (that is, the largest fraction of the plane covered by the soft disks) of an arbitrary totally separable soft disk packing with given soft parameter (Section~\ref{5}).

\section{An analogue of Oler's inequality for totally separable translative packings}\label{1}

We need the following definitions. 

\begin{Definition}\label{defn:totallyseparable}
Let $K \subseteq \Eu^2$ be a convex domain. A packing $\mathcal{F}$ of translates of $K$ in $\Eu^2$ is called \emph{totally separable} if any two members of $\mathcal{F}$ can be separated by a line which is disjoint from the interiors of all members of $\mathcal{F}$.
\end{Definition}



\begin{Definition}\label{defn:permissiblepolygon}
A closed polygonal curve $P = \bigcup_{i=1}^m [x_{i-1},x_i]$, where $x_0 = x_m$, is called \emph{permissible} if there is a sequence
of simple closed polygonal curves $P^n = \bigcup_{i=1}^m [x^n_{i-1},x^n_i]$, where $x^n_0 = x^n_m$, satisfying $x^n_i \to x_i$ for every value of $i$. The interior $\inter P$ is defined as $\lim_{n \to \infty} \inter P^n$.
\end{Definition}

\begin{Remark}\label{rem:interioriswelldefined}
By the properties of limits, if $P = \bigcup_{i=1}^m [x_{i-1},x_i]$ is permissible and $P^n$ and $Q^n$ are sequences of simple closed polygonal curves with $\lim_{n \to \infty} P^n = \lim_{n \to \infty} Q^n = P$, then $\lim_{n \to \infty} \inter P^n = \lim_{n \to \infty} \inter Q^n$, i.e., the interior of a permissible curve is well defined.
\end{Remark}

\begin{Definition}
Let $K$ be an $o$-symmetric convex domain in $\Eu^2$. Then let $\square(K)$ denote a minimal area circumscribed parallelogram of $K$.
\end{Definition}

The main result of this section is the following totally separable analogue of Oler's inequality.

\begin{Theorem}\label{thm:Oler}
Let $K$ be an $o$-symmetric convex domain in $\Eu^2$. Let $$\mathcal{F} = \{ x_i + K : i=1,2,\ldots, n\}$$ be a totally separable packing of $n$ translates of $K$ in $\Eu^2$, and set $X = \{ x_1, x_2, \ldots, x_n\}$. Furthermore, let $\Pi$ be a permissible closed polygonal curve with the following properties:  \begin{enumerate}
\item the vertices of $\Pi$ are points of $X$

and

\item $X \subseteq \Pi^*$ with $\Pi^* = \Pi \cup \inter \Pi$.
\end{enumerate}
Then
\begin{equation}\label{eq:Oler}
\frac{\area (\Pi^*)}{\area \left(\square (K)\right)} + \frac{M_K(\Pi)}{4} + 1 \geq n.
\end{equation}
\end{Theorem}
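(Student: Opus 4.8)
The plan is to mimic Oler's original inductive/combinatorial argument, but to exploit total separability in order to replace the hexagon $\diamond(K)$ by the parallelogram $\square(K)$. First I would reduce to the case in which $\Pi$ is a simple closed polygon: since $\Pi$ is permissible, it is a limit of simple closed polygons $\Pi^n$ with vertices a subset of a perturbed point set $X^n$; a perturbation argument (using that the packing condition and total separability are, up to arbitrarily small modifications, preserved, and that $\area(\Pi^*)$, $M_K(\Pi)$ and the count $n$ behave continuously in the limit) lets me assume $\Pi$ is simple and the $x_i$ are in general position. Also, by a standard affine normalization I may assume $\square(K)$ is the unit square $[-1/2,1/2]^2$, so that $\area(\square(K))=1$; the relative norm $|\cdot|_{\frac12(K-K)}$ then satisfies $|u|_\infty \le |u|_{\frac12(K-K)}$ in the sup-norm coming from the square, which is exactly the extra rigidity I want to use.

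Next I would set up the induction on $n$. The base cases $n=1,2$ are immediate from (\ref{eq:Oler}). For the inductive step, triangulate $\Pi^*$ using the points of $X$ as vertices (a "primitive" triangulation, in which no point of $X$ lies in the interior or on an edge other than as an endpoint), so that by Euler's formula the number of triangles is $2(n-1)-b$ where $b$ is the number of vertices of $\Pi$. Oler's strategy is to assign to each triangle $T$ with vertices $x_i,x_j,x_k$ a local inequality of the form
\begin{equation*}
\frac{\area(T)}{\area(\square(K))} + \frac{|x_i-x_j|_K + |x_j-x_k|_K + |x_k-x_i|_K}{4} \ge \frac12,
\end{equation*}
or a sharper substitute when one of the edges is a "long" edge, and then to sum these estimates over all triangles, with the edge-length contributions telescoping so that interior edges are counted with the correct multiplicity and boundary edges assemble into $M_K(\Pi)/4$. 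Carrying this out reduces the theorem to a purely local statement about a single triangle spanned by three translates of $K$ from a totally separable packing.

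The heart of the matter — and the step I expect to be the main obstacle — is proving this sharpened local inequality for a triangle in a totally separable packing, and this is precisely where $\square(K)$ must beat $\diamond(K)$. Concretely: if $x_i+K$, $x_j+K$, $x_k+K$ pairwise do not overlap \emph{and} each pair is separated by a line missing all three interiors, then the three separating lines, together with the geometry of $K$, force the triangle $\conv\{x_i,x_j,x_k\}$ to be "large" relative to the squares rather than merely relative to the hexagons; intuitively, total separability prevents the three domains from clustering in the efficient hexagonal configuration and forces an essentially axis-parallel (square-lattice-like) obstruction. I would prove this by a careful case analysis on the positions of the (at most three) separating lines, in each case bounding $|x_i-x_j|_K$ etc. from below by the widths of $\frac12(K-K)$ in the relevant directions and $\area(T)$ from below by a product of such widths, then optimizing. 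A clean way to package this is via John-type / width arguments: show that some affine image of $K$ placed in the configuration contains a translate of $\square(K)$ whose sides align with two of the separating lines, giving the area bound directly, while the third separating line gives the extra Minkowski-length term. Once the single-triangle inequality is in hand with the constant $\tfrac12$ and the sharper version for triangles incident to long boundary edges, the summation and the "$+1$" bookkeeping go exactly as in Oler's proof, and the perturbation/limit argument from the first step extends the conclusion from simple polygons back to all permissible $\Pi$.
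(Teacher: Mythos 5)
The step you yourself flag as the heart of the matter is where the proposal breaks, and it breaks in two ways. First, total separability of a triple puts no lower bound on the triangle of centers relative to $\square(K)$: for the unit disk $B$ the translates centered at $(0,0)$, $(2,0)$, $(4,\epsilon)$ form a totally separable triple (the lines $x=1$ and $x=3$ separate them and avoid all interiors), yet $\area\left(\conv\{(0,0),(2,0),(4,\epsilon)\}\right)=\epsilon$, while $\area(\square(B))=4$; such thin triangles can occur as interior cells of a primitive triangulation, so no inequality of the shape $\area(T)\geq c\,\area(\square(K))$ (even with credits only for long \emph{boundary} edges) is available. The local inequality you actually wrote, $\frac{\area(T)}{\area(\square(K))}+\frac14\left(|x_i-x_j|_K+|x_j-x_k|_K+|x_k-x_i|_K\right)\geq\frac12$, is trivially true for \emph{any} packing (each side has normed length at least $2$, so the perimeter term alone is $\geq\frac32$) and hence cannot encode the improvement from $\diamond(K)$ to $\square(K)$. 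Second, the bookkeeping cannot "telescope": each interior edge of the triangulation lies in two triangles, so it enters the summed local inequalities with total weight $\frac12$, but it does not occur at all in the target, where only $M_K(\Pi)$ survives. A triangulation of $n$ points with $b$ boundary vertices has $3n-3-2b$ interior edges, each of normed length at least $2$, so the spurious interior-edge contribution grows linearly in $n$ and destroys the count $n-1=\frac{f+b}{2}$ (with $f=2(n-1)-b$ triangles). A workable local lemma would need edge terms entering with signs that cancel across interior edges, i.e.\ area bounded below in terms of the edge lengths of thin triangles -- a much stronger statement than anything separability of a triple can give.

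For comparison, the paper never localizes to triangles and never uses separability of triples; it uses the \emph{global} structure that some line $L$ meets $\Pi$, separates two members of $\mathcal F$, and avoids the interiors of \emph{all} members. The induction cuts $\Pi$ along the strip $L+K$ of normed width $2$: after normalizing the minimal-area circumscribed parallelogram $P_L$ with a pair of sides parallel to $L$ to be a square of side $2$, the strip contributes area $2|p-q|_K$ (measured against $\area(P_L)=4\geq\area(\square(K))$), which exactly offsets the perimeter $2|p-q|_K$ lost in splitting, while the two crossings of the strip add $+4$ to the perimeter, supplying the extra $1$ needed when the two inductive estimates $F(\Pi_1)\geq k$ and $F(\Pi_2)\geq n-k$ are added; a rotating-ray construction then replaces the cut boundary by a permissible polygon with vertices in $X$ without increasing area or $M_K$. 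This direction-dependent circumscribed parallelogram attached to the separating line is precisely the mechanism by which $\square(K)$ replaces $\diamond(K)$, and it is the ingredient your proposal lacks.
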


\begin{Remark}\label{equality}
We note that equality in (\ref{eq:Oler}) of Theorem~\ref{thm:Oler} is attained in a variety of ways as indicated in Fig.~\ref{fig:Oler}, which consists of blocks of zig-zags and simple closed polygons having sides parallel to the two sides of a chosen $\square (K)$.
\end{Remark}

\begin{figure}[ht]
\begin{center}
\includegraphics[width=0.45\textwidth]{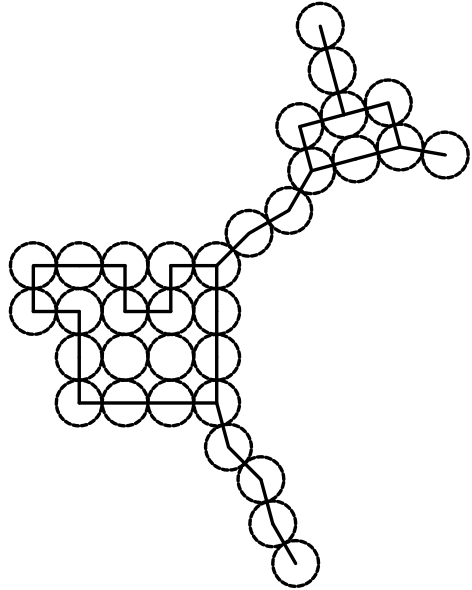}
\caption{A totally separable packing of translates of $K$ (with $K$ being a circular disk for the sake of simplicity), which satisfies the conditions in Theorem~\ref{thm:Oler} and for which there is equality in (\ref{eq:Oler}) of Theorem~\ref{thm:Oler}.}
\label{fig:Oler}
\end{center}
\end{figure}

\begin{Remark}\label{rem:parallelogram}
It is well-known that the width of any convex body $K$ in any direction is equal to the width of its central symmetrization $\frac{1}{2}(K-K)$ in this direction. This readily implies that $\square(K)$ does not change under central symmetrization.
\end{Remark}

\begin{Remark}\label{no-symmetry}
Let $\mathcal{F} = \{ x_i + K : i=1,2,\ldots, n\}$ be a family of $n$ translates of $K$ in $\Eu^2$, where $K$ is an $o$-symmetric convex domain of $\Eu^2$, and let $K^*$ be a convex domain satisfying $K = \frac{1}{2} (K^*-K^*)$ with $o \in \inter K^*$, and let $\mathcal{F}^* = \{ x_i + K^* : i=1,2,\ldots, n\}$. Then $\mathcal{F}$ is a packing if and only if $\mathcal{F}^*$ is a packing, and $\mathcal{F}$ is a totally separable packing if and only if $\mathcal{F^*}$ is a totally separable packing. (For details see for example, \cite{BeKhOl}.)
Thus, Theorem~\ref{thm:Oler} holds for any (not necessarily $o$-symmetric) plane convex domain $K^*$ (with $o \in \inter K^*$) as well.
\end{Remark}

\begin{Remark}
In fact, the proof of Theorem~\ref{thm:Oler} presented in this section works for more general point sets $X$ as well. Namely, one may assume only
that $\Pi^*$ can be cut into $n$ pieces by $(n-1)$ successive cuts by segments (cutting only one piece at a time) such that
\begin{itemize}
\item each segment starts (resp., ends) at some point of $\Pi$ or a preceding segment;
\item the relative interior of each segment is contained in the piece it cuts;
\item when cutting a piece by a segment then no point of $X$ lying in the given piece lies closer to this segment than one measured in the norm generated by $K$;
\item at the end, each piece contains exactly one point of $X$.
\end{itemize}
\end{Remark}

\begin{proof}
For any permissible closed polygonal curve $\Pi$ in Theorem~\ref{thm:Oler}, we set
\[
F(\Pi) = \frac{\area (\Pi^*)}{\area \left(\square (K)\right)} + \frac{M_K(\Pi)}{4} + 1.
\]

We prove the assertion by induction on $n$. Clearly, if $n=1$, then $F(\Pi) = 0 + 0 + 1=1$, and Theorem~\ref{thm:Oler} holds.

Assume that for any $n' < n$, Theorem~\ref{thm:Oler} holds for any totally separable translative packing of $K$ with $n'$ elements and for any permissible polygonal curve associated to it.
We prove that it holds for $n$ element packings as well.

Let $L$ be a line intersecting $\Pi$ and  separating the elements of $\mathcal{F}$.
We present the proof for the case only that $L$ intersects $\Pi$ at exactly two points, as the proof in the other cases is similar.
Let these intersection points be $p$ and $q$. Then $p$ and $q$ are points in the relative interior of some edges $p \in [p_1,p_2]$ and $q \in [q_1,q_2]$ of $\Pi$ whose vertices are not contained in $L + \inter K$. For simplicity, we imagine $L$ as a horizontal line, $p$ to the left of $q$, and $p_1$ and $q_1$ to be above $L$. Let $L_1$ and $L_2$ be the upper, respectively lower, line bounding $L + K$. For $i=1,2$, let $p'_i$ and $q'_i$ be the intersection points of $L_i$ with $[p,p_i]$ and $[q,q_i]$, respectively.

Without loss of generality, we assume that the parallelogram $P_L$ circumscribed about $K$ and having the property that its area is minimal among the circumscribed parallelograms, under the condition that it has a pair of sides parallel to $L$, is a square of edge length $2$. Thus, we have $\area (P_L) = 4$.

Observe that the lines $L_1$ and $L_2$ decompose $\Pi$ into four components: one above $L_1$, one below $L_2$, and the last two ones being the segments $[p'_1,p'_2]$ and $[q'_1,q'_2]$. We define $\Pi'_1$ as the union of the component above $L_1$ and the segment $[p'_1,q'_1]$, and we define $\Pi'_2$ similarly. Clearly, these polygonal curves are permissible. Finally, for $i=1,2$, we let $\Pi'^*_i = \Pi'_i \cup \inter \Pi'_i$ (cf. Figure~\ref{fig:Thm1_1}).

\begin{figure}[ht]
\begin{center}
\includegraphics[width=0.5\textwidth]{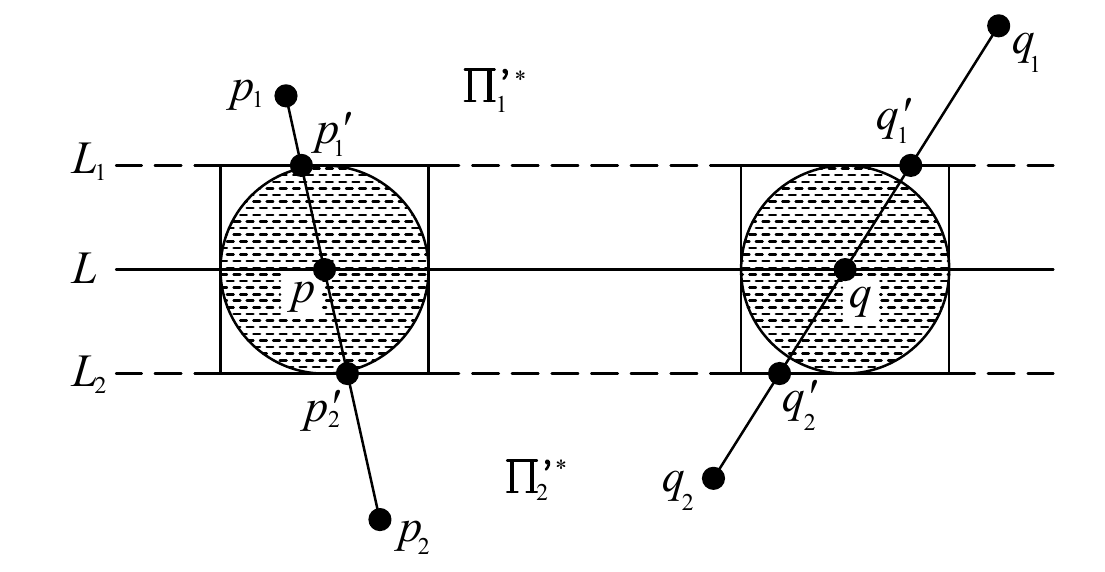}
\caption{Notations in the proof of Theorem~\ref{thm:Oler}}
\label{fig:Thm1_1}
\end{center}
\end{figure}

Then we have
\begin{equation}\label{eq:area}
\area (\Pi^*) = \area(\Pi'^*_1)+\area(\Pi'^*_2)+ \area \left( \conv \{ p'_1,p'_2, q'_2, q'_1\} \right) =
\end{equation}
\[
\area(\Pi'^*_1)+\area(\Pi'^*_2) + 2 |p-q| = \area(\Pi'^*_1)+\area(\Pi'^*_2) + 2 |p-q|_K.
\]
Furthermore, since the normed distance of $L_1$ and $L_2$ is two, we have
\begin{equation}\label{eq:perimeter}
M_K(\Pi) = M_K(\Pi'_1)+M_K(\Pi'_2)-|p'_1-q'_1|_K - |p'_2-q'_2|_K + |p'_1-p'_2|_K + |q'_1-q'_2|_K \geq
\end{equation}
\[
\geq M_K(\Pi'_1)+M_K(\Pi'_2) -2|p-q|_K +4.
\]
Now we define a polygonal curve in $\Pi'^*_1$. Consider the points of $X$ in the region $R_1$ bounded by $[p_1,p'_1]$, $[p'_1,q'_1]$, $[q'_1,q_1]$ and $[q_1,p_1]$. Note that this region is a (not necessarily convex) quadrangle. If $R_1$ is not convex, we assume, without loss of generality, that $p_1 \in \conv \{p'_1,q_1,q'_1\}$. Consider the ray starting at $p_1$ through $p$ and begin to rotate it counterclockwise until it hits the first point $p_2$ of $X$ in $R_1$. Then rotate this half line about $p_2$ clockwise until it hits the next point of $X$. Continuing
this process we end up with a simple curve $C_1$ in $R_1$, starting at $p_1$ and ending at $q_1$, which divides $R_1$ into two connected components one of which contains all points of $X$ in $R_1$. We remark that if $R_1$ is convex, then $C_1$ is a convex curve.

Let $\Pi_1$ denote the closed polygonal curve $\left( \Pi'_1 \setminus ([p_1,p'_1] \cup [p'_1,q'_1] \cup [q'_1,q_1] ) \right) \cup C_1$. It is easy to see that $\Pi_1$ is a permissible polygonal curve whose vertices are points of $X$ above $L$, and whose interior contain every other point of $X$ above $L$. Let $\Pi^*_1 = \Pi_1 \cup \inter \Pi_1$.
Clearly, $\area (\Pi^*_1) \leq \area (\Pi'^*_1)$. We show that $M_K(\Pi_1) \leq M_K(\Pi'_1)$.

\emph{Case 1:} $R_1$ is convex.\\
Note that in this case $C_1 \cup [p_1,q_1]$ is a convex region contained in the convex region $R_1$, and thus, $M_K(C_1)+|p_1-q_1|_K \leq M_K (\bd R_1)$, which readily implies our claim.

\begin{figure}[ht]
\begin{center}
\includegraphics[width=0.5\textwidth]{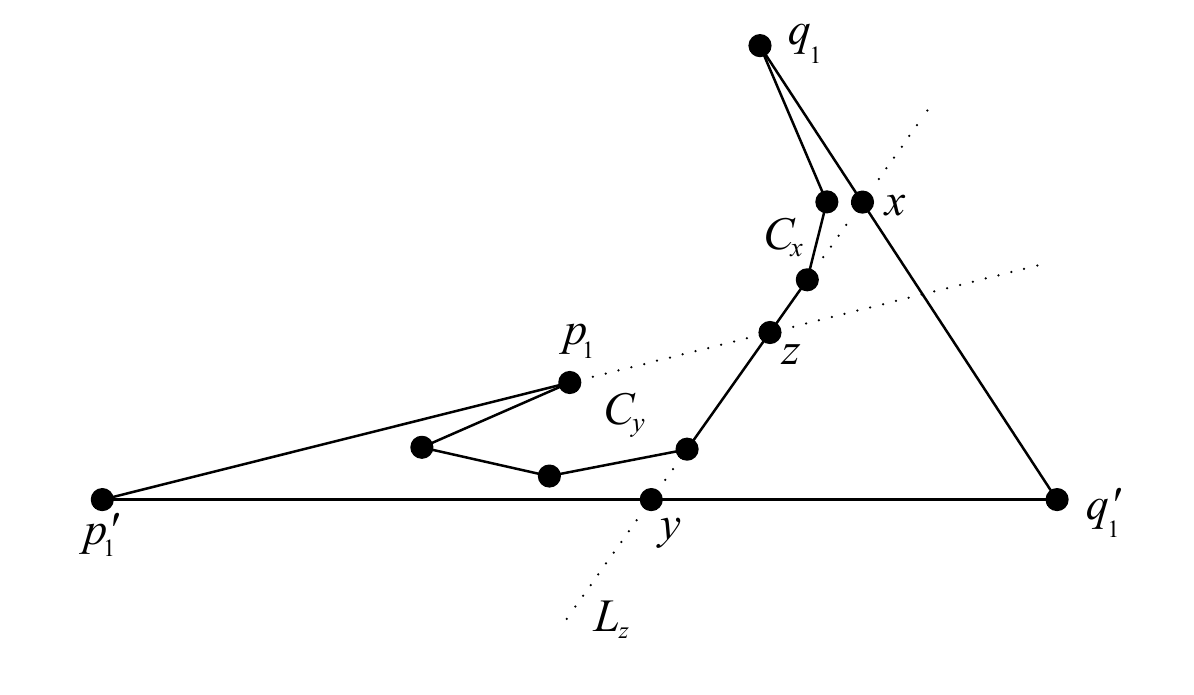}
\caption{The points $p_1,p'_1, q_1, q'_1$ are not in convex position as in Case 2}
\label{fig:Thm1_2}
\end{center}
\end{figure}

\emph{Case 2:} $R_1$ is not convex.\\
Then, according to our assumption, the line through $p'_1$ and $p_1$ intersects $[q_1,q'_1]$ (cf. Figure~\ref{fig:Thm1_2}). This line intersects $C_1$ at exactly one point $z$, and there is a line $L_z$ through $z$ which supports $C_1$ at $z$. Let $L_z$ intersect $[q_1,q'_1]$ at $x$, and $[p'_1,q'_1]$ at $y$.
The point $z$ decomposes $C_1$ into two convex polygonal curves $C_x$ and $C_y$ such that $p_1 \in C_y$ and $q_1 \in C_x$.
Then we have
\[
M_K(C_1) = M_K(C_y) + M_K(C_x) \leq |p_1-p'_1|_K + | p'_1-y|_K + |y-z|_K + |z-x|_K +|x-q_1|_K \leq
\]
\[
\leq |p_1-p'_1|_K + |p'_1 - q'_1|_K + |q'_1-q_1|_K.
\]
This implies that $M_K(\Pi_1) \leq M_K(\Pi'_1)$.

To construct a permissible polygon $\Pi_2$ in $\Pi'^*_2$ with the same properties we may apply an analogous process.

Thus, we have obtained two permissible polygons $\Pi_1$ and $\Pi_2$ associated to totally separable translative packings of $K$, with strictly less elements than $n$, say $k$ and $n-k$. Now, by (\ref{eq:area}), (\ref{eq:perimeter}), $\area \left(\square (K)\right) \leq 4$, $\area (\Pi^*_i) \leq \area (\Pi'^*_i)$, $M_K(\Pi_i) \leq M_K(\Pi'_i)$ (for $i=1,2$),  and the induction hypothesis, we have
\[
F(\Pi) \geq \frac{\area(\Pi^*_1)+\area(\Pi^*_2) + 2 |p-q|_K}{\area \left(\square (K)\right)} + \frac{M_K(\Pi_1)+M_K(\Pi_2) -2|p-q|_K +4}{4} + 1 \geq
\]
\[
\geq F(\Pi_1) + F(\Pi_2) \geq k+n-k = n.
\]
\end{proof}

\section{On the densest totally separable translative packings}\label{2}

Theorem~\ref{thm:Oler} and Remark~\ref{no-symmetry} imply the following statement, which was proved (using a method different from ours) for $o$-symmetric convex domains in \cite{FeFe} with a weaker estimate than (\ref{Fejes-Toth}) for convex domains in general namely, with $\square(K)$ standing for a minimal area circumscribed quadrangle of $K$.

\begin{Theorem}\label{cor:FeFe}
If $\delta_{sep}(K)$ denotes the largest (upper) density of totally separable translative packings of the convex domain $K$ in $\Eu^2$, then
\begin{equation}\label{Fejes-Toth}
\delta_{sep}(K) = \frac{\area(K)}{\area(\square(K))}.
\end{equation}
\end{Theorem}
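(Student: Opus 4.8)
The plan is to derive the exact value of $\delta_{sep}(K)$ from Theorem~\ref{thm:Oler} for the upper bound, and to exhibit an explicit periodic totally separable packing for the matching lower bound. By Remark~\ref{no-symmetry} and Remark~\ref{rem:parallelogram} it suffices to treat the case when $K$ is $o$-symmetric, since replacing $K$ by $\frac{1}{2}(K-K)$ changes neither the totally separable packing property, nor $\area(\square(K))$, nor (one must check) the ratio $\area(K)/\area(\square(K))$ — the latter because the relevant configurations for the lower bound are lattice-like and a packing of translates of $K^*$ corresponds to a packing of translates of $\frac12(K^*-K^*)$ with the same density. Actually, to be safe I would argue the two inequalities directly: the upper bound for general $K$ via Oler's inequality in the form stated (which already holds for non-symmetric $K$ by Remark~\ref{no-symmetry}), and the lower bound by an explicit construction for general $K$.

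For the upper bound, let $\mathcal{F}$ be a totally separable translative packing of $K$ with density $\delta$; I would take a large square $Q_R$ of side $R$, let $n = n(R)$ be the number of translates of $K$ lying entirely inside $Q_R$, and apply Theorem~\ref{thm:Oler} to a permissible polygonal curve $\Pi$ through (a subset of) the centers $X$ whose interior contains all of $X$ and whose $\Pi^*$ is contained in a slightly enlarged square; e.g.\ one can take $\Pi^*$ to be essentially the convex hull of $X$, so $\area(\Pi^*) \le (R + c)^2$ for a constant $c$ depending only on $K$, and $M_K(\Pi) \le C R$ for a constant $C$ (the Minkowski perimeter of a square of side $\approx R$ is linear in $R$). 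Then (\ref{eq:Oler}) gives
\[
n \le \frac{(R+c)^2}{\area(\square(K))} + \frac{CR}{4} + 1,
\]
so $n\cdot\area(K) \le \frac{\area(K)}{\area(\square(K))}(R+c)^2 + O(R)$. Dividing by $R^2$ and letting $R \to \infty$ yields $\delta \le \frac{\area(K)}{\area(\square(K))}$; taking the supremum over all totally separable packings gives $\delta_{sep}(K) \le \frac{\area(K)}{\area(\square(K))}$.

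For the lower bound, I would construct a totally separable lattice-like packing attaining the ratio. Pick a minimal-area circumscribed parallelogram $\square(K)$, whose sides are parallel to two vectors $u, v$; tiling the plane by translates of $\square(K)$ along the lattice $\Lambda$ generated by the edge vectors $2u, 2v$ (with suitable scaling so the edges are $u,v$), placing one translate of $K$ inside each tile, produces a packing, and since the copies of $K$ sit in a grid of parallelograms whose shared edges are segments of lines missing all interiors of the $K$'s, this packing is totally separable. Its density is exactly $\area(K)/\area(\square(K))$. Hence $\delta_{sep}(K) \ge \frac{\area(K)}{\area(\square(K))}$, and combined with the upper bound this proves (\ref{Fejes-Toth}).

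The main obstacle is the boundary bookkeeping in the upper bound: one must justify the existence of a permissible curve $\Pi$ through points of $X$ whose interior captures all of $X$ while keeping both $\area(\Pi^*)$ within $(R+O(1))^2$ and $M_K(\Pi) = O(R)$, uniformly, and then handle the mild discrepancy between "centers inside $Q_R$" and "translates inside $Q_R$" (which only costs an $O(R)$ correction). A secondary subtlety is checking that the explicit grid packing is genuinely totally separable for an arbitrary, possibly non-symmetric $K$ — here one uses that the tiles $\square(K)+\lambda$, $\lambda\in\Lambda$, partition the plane and each contains exactly one translate of $K$ in its interior, so any two translates are separated by the common boundary line of two adjacent tiles (or a parallel line), which meets no interior of any $K$.
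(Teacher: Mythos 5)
Your proposal is correct and follows essentially the same route as the paper: the upper bound comes from applying Theorem~\ref{thm:Oler} (via Remark~\ref{no-symmetry}) to the boundary of the convex hull of the centers in a large window, whose area grows like the window and whose $M_K$-length is $O(R)$, and then letting the window grow, while the lower bound is the lattice-like packing inscribed in a tiling by translates of $\square(K)$, which the paper treats as immediate. The only cosmetic difference is that the paper uses the dilates $tK$ as the growing window (bounding the translates meeting $tK$ inside $(t+C)K$) instead of squares, which changes nothing essential.
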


\begin{proof}
Clearly, $\delta_{sep}(K) \geq \frac{\area(K)}{\area(\square(K))}$. To show the opposite inequality, without loss of generality we may assume that $o \in \inter K$.

Set $C = \min \{ \mu>0: K-K \subseteq \mu K\}$, and $C' = \frac{M_K(\bd K)}{4}$.
Consider any totally separable packing $\mathcal{F}$ of translates of $K$ in $\Eu^2$. For any $t > 0$, let $\mathcal{F}_t$ denote the subfamily of $\mathcal{F}$ consisting of the elements that intersect $tK$, and let $X_t$ denote the set of the translation vectors of the elements of $\mathcal{F}_t$ and $n_t$ the cardinality of $\mathcal{F}_t$.
Note that if $y \in (x+K) \cap tK$, then $x+K \subseteq y+(K-K)$ and therefore $x+K  \subseteq (t+C)K$, implying that $\bigcup \mathcal{F}_t \subseteq (t+C)K$.
On the other hand, by Theorem~\ref{thm:Oler} and Remark~\ref{no-symmetry}, it follows that
\[
n_t \leq \frac{\area (\conv (X_t))}{\area(\square (K))}+\frac{M_K(\bd \conv(X_t))}{4}+1 \leq
\]
\[
\frac{\area ((t+C)K)}{\area(\square (K))}+\frac{M_K(\bd ((t+C)K))}{4}+1 = (t+C)^2 \frac{\area (K)}{\area(\square (K))}+(t+C)C'+1.
\]
This yields that
\[
\frac{\area((\bigcup \mathcal{F}) \cap tK)}{\area(tK)} \leq \frac{\area(\bigcup\mathcal{F}_t)}{\area(tK)}=\frac{n_t\area (K)}{\area(tK)}=\frac{n_t}{t^2} \leq \frac{(t+C)^2\area(K)}{t^2\area(\square(K))} + \frac{tC'+CC'+1}{t^2},
\]
from which the claim follows by letting $t\to+\infty$.
\end{proof}

Theorem~\ref{cor:Fary} is a totally separable analogue of the well-known theorem (which is a combination of the results published in \cite{Fary}, \cite{FTL50}, \cite{FTL83}, and \cite{R}), stating that the maximal density of translative packings of a convex domain in $\Eu^2$ is minimal if and only if the domain is a triangle.

\begin{Theorem}\label{cor:Fary}
For any convex domain $K$ in $\Eu^2$, we have
\begin{equation}\label{eq:density}
\frac{1}{2} \leq \delta_{sep}(K) \leq 1,
\end{equation}
with equality on the left if and only if $K$ is a triangle, and on the right if and only if $K$ is a parallelogram.
\end{Theorem}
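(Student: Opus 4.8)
The plan is to derive the statement from Theorem~\ref{cor:FeFe}, which gives $\delta_{sep}(K)=\area(K)/\area(\square(K))$; then (\ref{eq:density}) and its equality cases become the purely affine claim that $\frac12\le\area(K)/\area(\square(K))\le 1$, with equality on the right precisely for parallelograms and on the left precisely for triangles. The right-hand inequality is immediate, since $\square(K)$ is a parallelogram containing $K$; and if $\area(K)=\area(\square(K))$ then the convex bodies $K\subseteq\square(K)$ have equal area, so $K=\square(K)$ and hence $K$ is a parallelogram, while conversely every parallelogram $K$ satisfies $\square(K)=K$. This disposes of the right end of (\ref{eq:density}).

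For the left end I would work after an affine transformation (which preserves ratios of areas as well as the properties of being a triangle and of being a parallelogram) that makes $\square(K)$ the unit square $[0,1]^2$; the task is then to prove $\area(K)\ge\frac12$, with equality only when $K$ is a triangle. Since $[0,1]^2$ is a minimal-area circumscribed parallelogram of $K$, each of its four sides meets $K$, and the optimality condition (a short convexity argument on the width of $K$ under shears) forces $K$ to contain two points $(0,h),(1,h)$ on the vertical sides at a common height $h$ and two points $(v,0),(v,1)$ on the horizontal sides at a common abscissa $v$. The quadrilateral $Q=\conv\{(0,h),(v,0),(1,h),(v,1)\}$ has mutually perpendicular diagonals, each of length $1$, so $\area(Q)=\frac12$; as $Q\subseteq K$ this gives $\area(K)\ge\frac12$, and equality forces $K=Q$.

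It then remains to exclude $h,v\in(0,1)$, which would make $K=Q$ a genuine quadrilateral; this is the step I expect to be the main obstacle. The idea is that such a $Q$ has a circumscribed parallelogram of area strictly below $1$, contradicting the minimality of $[0,1]^2$: for any two adjacent edges of $Q$, the intersection of the slab between the two supporting lines of $Q$ parallel to one edge and the slab between the two supporting lines parallel to the other edge is a circumscribed parallelogram, and a direct computation shows that the one built on the two edges meeting at $(v,0)$ has area $\max(h,v)\max(h,1-v)/h$ while the one built on the two edges meeting at $(1,h)$ has area $\max(h,1-v)\max(1-h,1-v)/(1-v)$; a brief case check on the position of $v$ relative to $h$ and to $1-h$ shows that at least one of these two is $<1$. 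Hence $h$ or $v$ lies in $\{0,1\}$, and $K=Q$ collapses to a triangle. Finally, each triangle $K$ does achieve equality: we already have $\area(K)/\area(\square(K))\ge\frac12$, while after carrying $\square(K)$ to the unit square the reverse inequality $\area(\square(K))\ge 2\area(K)$ is the elementary fact that a triangle contained in $[0,1]^2$ has area at most $\frac12$ (push each vertex to a corner of the square). One may instead simply cite the extremal case of Fáry's theorem, \cite{Fary}, for the whole left-end analysis.
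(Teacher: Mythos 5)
Your proposal is correct and, for the inequality itself, follows essentially the paper's route: both reduce via Theorem~\ref{cor:FeFe} to the affine statement $\tfrac12\le \area(K)/\area(\square(K))\le 1$, normalize the minimal circumscribed parallelogram to $[0,1]^2$, produce contact points $(0,h),(1,h)$ and $(v,0),(v,1)$ on opposite sides with matching coordinates, and inscribe the orthodiagonal quadrilateral $Q$ of area $\tfrac12$. The step you only parenthesize (``a short convexity argument on the width of $K$ under shears'') is precisely the step the paper does prove: if the contact sets on the two vertical sides had disjoint height ranges, one rotates the two vertical support lines about $p_2$ and $p_4=p_2-(1,0)$ by a common small angle to get an equal-area circumscribed parallelogram with a side disjoint from $K$, which could then be translated inward, contradicting minimality; to be complete you should spell this out, as it is the one nontrivial ingredient you assert rather than prove. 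Where you genuinely diverge is the equality case: the paper excludes $h,v\in(0,1)$ by running the same rotation argument a second time on a rotated minimum-area parallelogram $P'$, whereas you exhibit explicit circumscribed parallelograms of $Q$ spanned by pairs of adjacent edge directions and compute their areas. I checked your formulas: the parallelogram built on the two edges meeting at $(v,0)$ has area $\max(h,v)\max(h,1-v)/h$, the one built at $(1,h)$ has area $\max(h,1-v)\max(1-h,1-v)/(1-v)$, and the case check goes through (if $v\le h$ or $1-v\le h$ the first value is one of $h$, $v$, $1-v$, all $<1$; otherwise $v\ge h$ and $1-v\ge h$ force the second to equal $1-h<1$), so $[0,1]^2$ would not be area-minimal --- a clean contradiction. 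Your variant buys an explicit, checkable certificate (a concretely cheaper circumscribed parallelogram) at the cost of a small computation, while the paper's argument is computation-free but leans twice on the soft perturbation lemma; you also record the easy ``if'' directions (triangles attain $\tfrac12$, parallelograms attain $1$), which the paper leaves implicit.
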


\begin{proof}
The right-hand side inequality in (\ref{eq:density}) is an immediate consequence of Theorem~\ref{cor:FeFe}. We prove only the left-hand side inequality.
Let $P$ be a minimum area parallelogram circumscribed about $K$. Without loss of generality, we may assume that $P$ is the square $[0,1]^2$ in a suitable Cartesian coordinate system.
Let the sides of $P$ be $S_1, S_2, S_3, S_4$ in counterclockwise order such that the endpoints of $S_1$ are $(0,0)$ and $(1,0)$. Since $P$ has minimum area, each side of $P$ intersects $K$.

We show that $S_2 \cap K$ and $S_4 \cap K$ contain points with equal $y$-coordinates.
Suppose for contradiction that it is not so. Then $(1,0)+(S_4 \cap K)$ and $S_2 \cap K$ are disjoint, implying that there is some point $p_2 \in S_2$ separating these two sets. Set $p_4 = (-1,0)+p_2$. Then we may rotate the line of $S_2$ around $p_2$, and the line of $S_4$ around $p_4$ slightly, with the same angle, to obtain a parallelogram containing $K$, with area equal to $\area(P)$ and having two sides disjoint from $K$, which contradicts our assumption that $P$ has minimum area. Thus, there are some points $p_4 = (0,t) \in S_4 \cap K$ and $p_2=(1,t)\in S_2 \cap K$ for some $t \in [0,1]$.
We obtain similarly the existence of points $p_1 = (s,0) \in S_1 \cap K$ and $p_3=(s,1) \in S_3 \cap K$.
Hence, $\area(K) \geq \area (\conv \{ p_1,p_2,p_3,p_4\}) = \frac{1}{2} \area(P)$, which yields the left-hand side inequality in (\ref{eq:density}).

Now we examine the equality case. Note that, using the notations of the previous paragraph, $\frac{1}{2} = \delta_{sep}(K)$ implies that
$K = \conv \{ p_1,p_2,p_3,p_4\}$. Consider the case that $s,t \in (0,1)$. Let $P'$ be the parallelogram obtained by rotating the line of $S_2$ around $p_2$ and the line of $S_4$ around $p_4$, with the same small angle. Then $P'$ is a parallelogram circumscribed about $K$, having area equal to $\area(P)$.
Let the sides of $P'$ be $S'_1,S'_2,S'_3, S'_4$ such that for $i=1,2,3,4$, $p_i \in S'_i$. Observe that $S'_1 \cap K = \{ p_1\}$, $S'_3 \cap K = \{p_3\}$, and $[p_1,p_3]$ is not parallel to $S'_2$ and $S'_4$. Thus, applying the argument in the previous paragraph, it follows that $P'$ is not a minimum area circumscribed parallelogram, a contradiction.
Thus, $s$ or $t$ is equal to $0$ or $1$, which implies that $K$ is a triangle.
\end{proof}

\section{On the smallest area convex hull of totally separable translative finite packings}\label{3}

\begin{Theorem}\label{thm:areaformula}
Let $\mathcal{F} = \{ c_i + K : i=1,2,\ldots, n\}$ be a totally separable packing of $n$ translates of the convex domain $K$ in $\Eu^2$.
Let  $C = \conv \{ c_1,c_2,\ldots, c_n \}$.
\begin{enumerate}
\item[(\ref{thm:areaformula}.1)]
Then we have
\[
\area \left(\conv \left( \bigcup_{i=1}^n (c_i+K) \right)\right) = \area (C+K)\geq \frac{2}{3} (n-1)\area \left(\square(K)\right) + \area (K)+\frac{1}{3} \area(C).
\]
\item[(\ref{thm:areaformula}.2)]
If $K$ or $C$ is centrally symmetric, then
\[
\area (C+K)\geq (n-1) \area \left(\square(K)\right) + \area (K).
\]
\end{enumerate}
\end{Theorem}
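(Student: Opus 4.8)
The plan is to derive Theorem~\ref{thm:areaformula} from Theorem~\ref{thm:Oler} (in the form extended by Remark~\ref{no-symmetry}) together with the classical planar identity $\area(C+K)=\area(C)+2A(C,K)+\area(K)$, where $A(\cdot,\cdot)$ denotes the mixed area and we use $\conv\left(\bigcup_i(c_i+K)\right)=C+K$. Write $\bar K=\tfrac12(K-K)$. Applying Theorem~\ref{thm:Oler} and Remark~\ref{no-symmetry} to the permissible closed polygonal curve $\Pi=\bd C$ — whose vertices lie in $\{c_1,\dots,c_n\}$ and which satisfies $\{c_i\}\subseteq\Pi^*=C$, the cases where $C$ is a segment or a point being absorbed by the permissible-curve formalism — gives
\[
\frac{M_K(\bd C)}{4}\,\area\!\left(\square(K)\right)\ \ge\ (n-1)\,\area\!\left(\square(K)\right)-\area(C).
\]
Hence it suffices to prove the purely geometric inequality
\[
2A(C,K)\ \ge\ \alpha\cdot\frac{M_K(\bd C)}{4}\,\area\!\left(\square(K)\right),
\]
with $\alpha=1$ when $K$ or $C$ is centrally symmetric and $\alpha=\tfrac23$ in general: substituting this into the area identity turns the two displays into exactly (\ref{thm:areaformula}.2) (with $\alpha=1$, so the coefficient of $\area(C)$ cancels) and (\ref{thm:areaformula}.1) (with $\alpha=\tfrac23$, leaving the residual $\tfrac13\area(C)$). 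So the whole problem reduces to this inequality.

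For the symmetric case I would first reduce to $\bar K$. By linearity of the mixed area, $2A(C,\bar K)=A(C,K)+A(C,-K)$, and since $A(C,-K)=A(-C,K)$ this equals $2A(C,K)$ whenever $C$ is centrally symmetric (then $-C$ is a translate of $C$) or $K$ is centrally symmetric (then $\bar K=K$); also $M_{\bar K}=M_K$ and, by Remark~\ref{rem:parallelogram}, $\square(\bar K)=\square(K)$. So it is enough to show, for an $o$-symmetric $L$ and any convex polygon $C$, that $2A(C,L)\ge\tfrac14\,M_L(\bd C)\,\area(\square(L))$. Describing $C$ through its edge vectors, with Euclidean-unit directions $\hat w_j$ and outer unit normals $u_j\perp\hat w_j$, one has $2A(C,L)=\sum_j|w_j|\,h_L(u_j)$ and $M_L(\bd C)=\sum_j|w_j|/\rho_L(\hat w_j)$ (support and radial functions taken about the center of $L$, so every summand is nonnegative), hence it suffices to verify the single-direction estimate $4\,h_L(u)\,\rho_L(\hat w)\ge\area(\square(L))$ for perpendicular unit vectors $\hat w\perp u$. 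I would prove this by exhibiting a circumscribed parallelogram of that area: the two lines parallel to $\hat w$ supporting $L$ (at distance $h_L(u)$ from the center) together with the two supporting lines of $L$ at the boundary points $\pm\rho_L(\hat w)\,\hat w$ bound a parallelogram containing $L$ whose area is exactly $4\,h_L(u)\,\rho_L(\hat w)$; since $\square(L)$ has minimal area, $\area(\square(L))\le 4\,h_L(u)\,\rho_L(\hat w)$.

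It remains to pass from $\alpha=1$ for $\bar K$ to $\alpha=\tfrac23$ for $K$, that is, $2A(C,K)\ge\tfrac23\cdot 2A(C,\bar K)$; via $2A(C,\bar K)=A(C,K)+A(-C,K)$ this is equivalent to $A(-C,K)\le 2A(C,K)$. Here I would invoke Minkowski's inclusion: if $g$ is the centroid of $K$ then $-\tfrac12(K-g)\subseteq K-g$, so $h_K(-u)\le 2h_K(u)$ for every unit $u$ when the support function is taken about $g$; multiplying by $|w_j|$ and summing over the edges of $C$ gives $2A(-C,K)=\sum_j|w_j|\,h_K(-u_j)\le 2\sum_j|w_j|\,h_K(u_j)=4A(C,K)$, and since mixed areas are translation invariant this last inequality does not depend on the origin. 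Combining the two previous paragraphs yields the geometric inequality in general, and with it the theorem.

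The step I expect to be most delicate is bookkeeping of origin-dependence: the identity $2A(C,K)=\sum_j|w_j|\,h_K(u_j)$ holds for an arbitrary origin only because $\sum_j|w_j|\,u_j=0$, so the single-direction estimate and the $A(-C,K)\le 2A(C,K)$ argument must each be carried out with a consistent, judiciously chosen origin — the center of $L$ in the symmetric lemma, the centroid of $K$ in the factor-$\tfrac23$ step — and one must be sure the two elementary geometric facts (the circumscribed parallelogram of area $4h_L(u)\rho_L(\hat w)$, and the Minkowski inclusion) are applied to the correct body. Everything else is routine area bookkeeping once the mixed-area identity and Theorem~\ref{thm:Oler} are available.
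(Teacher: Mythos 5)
Your proposal is correct and takes essentially the same route as the paper: Theorem~\ref{thm:Oler} (with Remark~\ref{no-symmetry}) applied to $\bd C$, the identity $\area(C+K)=\area(C)+2A(C,K)+\area(K)$, and your key inequality $2A(C,K)\ge\alpha\cdot\frac{M_K(\bd C)}{4}\,\area\left(\square(K)\right)$ with $\alpha=1$ resp.\ $\tfrac23$ is exactly the paper's Lemma~\ref{lem:Radon}, proved there by the same per-edge circumscribed-parallelogram estimate. The only cosmetic differences are that you handle the ``$C$ centrally symmetric'' case by reducing to $\bar K=\tfrac12(K-K)$ via $A(C,\bar K)=A(C,K)$ instead of pairing opposite edges of $C$, and you get $-\tfrac12K\subseteq K$ from the centroid (Minkowski's inclusion) rather than from the center of a maximum-area inscribed triangle.
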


\begin{Remark}
We note that equality is attained in $(\ref{thm:areaformula}.1)$ of Theorem~\ref{thm:areaformula} for the following totally separable translative packings of a triangle (cf. Figure~\ref{fig:area_triangle}).
Let $K$ be a triangle, with the origin $o$ at a vertex, and $u$ and $v$ being the position vectors of the other two vertices, and let $T = m K$, where $m > 1$ is an integer. Let $\mathcal{F}$ be the family consisting of  the elements of the lattice packing $\{ iu+jv +K : i,j, \in \mathbb{Z} \}$ contained in $T$.
Then $\mathcal{F}$ is a totally separable packing of $n=\frac{m(m+1)}{2}$ translates of $K$ with $\conv \left(\bigcup \mathcal{F} \right) = T=C+K$, where $C=(m-1)K$.
Thus, $\area(T) = m^2 \area (K)=[\frac{2}{3}m(m+1)-\frac{1}{3}+\frac{1}{3}(m-1)^2]\area (K)=\frac{4}{3}(n-1)\area (K)+\area (K)+\frac{1}{3}\area (C)=$
$\frac{2}{3}(n-1)\area (\square(K))+\area(K)+\frac{1}{3}\area(C)$.
\end{Remark}

\begin{figure}[ht]
\begin{center}
\includegraphics[width=0.2\textwidth]{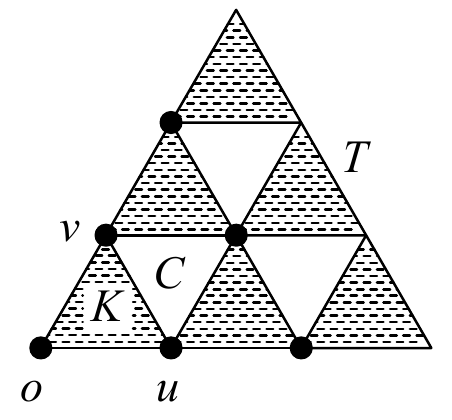}
\caption{An example for equality in (\ref{thm:areaformula}.1)}
\label{fig:area_triangle}
\end{center}
\end{figure}

\begin{Remark}
In (\ref{thm:areaformula}.2) of Theorem~\ref{thm:areaformula} equality can be attained in a variety of ways shown in Figure~\ref{fig:convexhull} for both cases namely, when $C$ is centrally symmetric (and $K$ is not centrally symmetric such as a triangle) and when $K$ is centrally symmetric (such as a circular disk) without any assumption on the symmetry of $C$. 
\end{Remark}


\begin{figure}[ht]
\begin{center}
\includegraphics[width=0.6\textwidth]{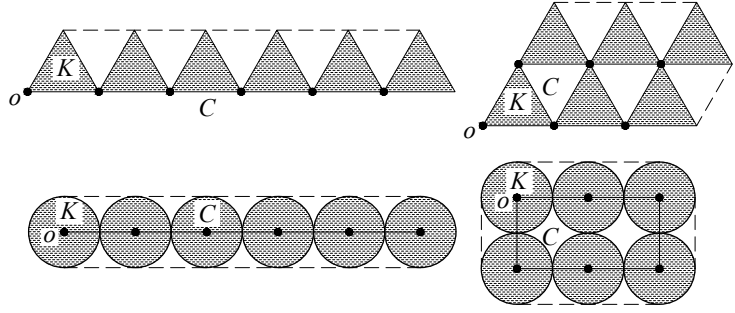}
\caption{Totally separable translative packings of a triangle and a unit disk for which equality is attained in (\ref{thm:areaformula}.2) in Theorem~\ref{thm:areaformula}}
\label{fig:convexhull}
\end{center}
\end{figure}

\begin{proof}
We start with proving the following inequalities.
\begin{Lemma}\label{lem:Radon}
Let $K$ be a convex domain in $\Eu^2$ and let $Q$ be a convex polygon. Furthermore, let $A(Q,K)$ denote the mixed area of $Q$ and $K$.
\begin{enumerate}
\item[(\ref{lem:Radon}.1)] Then we have
\[
\frac{12 A(Q,K)}{\area\left(\square (K)\right)} \geq M_K(\bd Q).
\]
Here, equality holds, for instance, if $Q=K$ is a triangle.
\item[(\ref{lem:Radon}.2)] If $K$ or $Q$ is centrally symmetric, then
\[
\frac{8 A(Q,K)}{\area\left(\square (K)\right)} \geq M_K(\bd Q).
\]
Furthermore, if $K$ is centrally symmetric, then equality holds for every convex polygon $Q$ if and only if $\bd K$ is a Radon curve.
\end{enumerate}
\end{Lemma}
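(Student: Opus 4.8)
The plan is to reduce both inequalities to statements about the support function of $K$ and the edge vectors of $Q$, and then to exploit the definition of $\square(K)$ as a minimal area circumscribed parallelogram. Write $\bd Q$ as a cycle of edge vectors $e_1,\dots,e_m$ with $\sum_j e_j = o$, where the edge $e_j$ has outer normal direction $u_j$. Recall that the mixed area satisfies $A(Q,K) = \frac12\sum_j h_K(u_j)\,|e_j|$ when $K$ is written via its support function $h_K$ (the factor-$\frac12$ normalization chosen so that $A(K,K)=\area(K)$), while by definition $M_K(\bd Q) = \sum_j |e_j|_{\frac12(K-K)} = \sum_j w_j(e_j)$, where $w_j(e_j)$ is the $\tfrac12(K-K)$-norm of $e_j$. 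So both sides are sums over edges, and it suffices to prove the \emph{per-edge} inequality: for a single segment with edge vector $e$ and outer normal $u$ perpendicular to $e$,
\[
\frac{12\cdot \frac12 h_K(u)\,|e|}{\area(\square(K))} \geq |e|_{\frac12(K-K)},
\]
i.e. $6\,h_K(u)/\area(\square(K)) \geq |\hat e|_{\frac12(K-K)}$ where $\hat e = e/|e|$; and the analogous per-edge bound with $6$ replaced by $4$ in the centrally symmetric case. Actually one must be a little careful: only $\sum_j h_K(u_j)|e_j|$ is invariant, not each term (the split of $A(Q,K)$ into a sum depends on the choice of origin for $h_K$), so the clean way is to fix the origin at an interior point of $K$ and observe that the inequality I want is positively homogeneous and additive over edges, so it is enough to verify it for each edge direction separately using the \emph{width function} rather than the support function. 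Concretely: for direction $u$, the width of $K$ in direction $u$ equals $h_K(u) + h_K(-u) = h_{\frac12(K-K)}(u)\cdot 2$, and $|\hat e|_{\frac12(K-K)}$ is controlled by the width of $K$ in the direction \emph{perpendicular} to $e$.

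The heart of the matter is therefore a two-dimensional, two-direction estimate comparing, for a fixed pair of perpendicular directions (the direction of $e$ and its normal $u$), the width $w_K(u)$ of $K$ in direction $u$ against the length $|\hat e|_{\frac12(K-K)}$ of a unit vector along $e$; and the role of $\area(\square(K))$ is to provide the bridge, since $\square(K)$ is the \emph{smallest}-area circumscribed parallelogram. The key observation: for any direction $v$, if we circumscribe about $K$ a parallelogram with one pair of sides perpendicular to $v$ (i.e. "width direction" $v$), its area is $w_K(v)\cdot \ell$, where $\ell$ is the $\frac12(K-K)$-length of a unit vector in the complementary side direction, hence $\area(\square(K)) \le w_K(u)\cdot |\hat e|_{\frac12(K-K)}$ (taking $v=u$, side direction along $e$). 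Wait — that gives an inequality in the \emph{wrong} direction. The correct route is the reverse: I should bound $|\hat e|_{\frac12(K-K)}$ from above using that $\square(K)$, being minimal, cannot be too small; equivalently, $w_K(u)\cdot|\hat e|_{\frac12(K-K)}$ is the area of \emph{some} circumscribed parallelogram (the one with sides perpendicular to $u$ and parallel to $e$), but to get a \emph{lower} bound on $w_K(u)$ in terms of $\area(\square(K))$ I need an \emph{upper} bound on $|\hat e|_{\frac12(K-K)}$, and for that I use that $\tfrac12(K-K) \subseteq$ (a dilate of) any circumscribed parallelogram of $K$. The cleanest formulation: normalize $\square(K)$ to be the unit square $[0,1]^2$; then $K \subseteq [0,1]^2$ and (by minimality, as in the proof of Theorem~\ref{cor:Fary}) $K$ touches all four sides, so $w_K(e_1^\perp) = w_K(e_2^\perp) = 1$ while for a general direction $u$, $w_K(u) \ge$ the width of the inscribed "diamond" $\conv\{p_1,p_2,p_3,p_4\}$, and $\tfrac12(K-K) \subseteq [-1,1]^2 = \square(\tfrac12(K-K))$. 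Combining the containment $\tfrac12(K-K)\subseteq$ unit cube with $\area(\square(K))=1$ after normalization turns the per-edge claim into: $12\cdot\frac12 h_K(u) \ge$ ($\frac12(K-K)$-length of $\hat e$), summed up to give $12 A(Q,K)\ge \area(\square(K))M_K(\bd Q)$; and the constant $12$ versus $8$ comes from the worst case over $u$ being, respectively, an arbitrary direction (where a triangle forces the factor $3$ slack, as in $\delta_{sep}$ being as small as $1/2$ on the area side but $1/3$ on the perimeter side) versus a centrally symmetric body (no triangle slack). I expect the ratio $3$ to enter exactly through $\area(\conv\{p_1,\dots,p_4\}) = \frac12\area(\square(K))$ combined with the affine-invariance reduction to the square.

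The main obstacle, and where the real work lies, is the sharp constant: proving $\frac{12A(Q,K)}{\area(\square(K))}\ge M_K(\bd Q)$ with the \emph{best} constant $12$ (not, say, $16$), and characterizing the equality case ($Q=K$ a triangle in part~(1), $\bd K$ a Radon curve in part~(2)). For part~(2), after the reduction to a per-direction estimate, the statement "$\frac{8A(Q,K)}{\area(\square(K))} = M_K(\bd Q)$ for \emph{every} polygon $Q$ iff $\bd K$ is a Radon curve" is essentially the defining property of Radon curves: $\bd K$ is Radon precisely when the isoperimetrix of the norm $|\cdot|_K$ is a homothet of $K$ itself, equivalently when $h_K(u)$ is proportional to $|\hat e_u|_{\frac12(K-K)}$ \emph{with the same constant for all directions} $u$ (here $e_u\perp u$); the constant is then pinned down by testing $Q=K$. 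So the plan for part~(2) is: (a) establish the per-edge inequality with constant $8$ via the minimal-parallelogram normalization and Remark~\ref{rem:parallelogram} (so we may assume $K$ itself $o$-symmetric); (b) observe equality for all $Q$ forces the per-direction equality $8\cdot\frac12 h_K(u)/\area(\square(K)) = |\hat e_u|_{\frac12(K-K)}$ for all $u$, which is exactly the Radon condition; (c) conversely, Radon curves satisfy this, by the known formula for the area of the isoperimetrix. For part~(1) the non-symmetric case reduces to the symmetric one only for the right-hand side $M_K$ (which by definition already uses $\tfrac12(K-K)$), but $A(Q,K)$ and $\area(\square(K))$ also only depend on $\tfrac12(K-K)$ in the relevant way — $\square(K)$ by Remark~\ref{rem:parallelogram}, and $A(Q,K)$ up to the linear term — so the factor jumps from $8$ to $12$ precisely because dropping central symmetry of $Q$ allows the triangle, and one checks $Q=K=$ triangle gives equality by direct computation as in the Remark following Theorem~\ref{thm:areaformula}.
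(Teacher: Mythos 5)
Your proposal correctly identifies the general shape of the argument (reduce to a per-edge statement via $A(Q,K)=\frac12\sum_i l_i h_K(x_i)$, use the minimality of $\square(K)$, treat the non-symmetric case by a factor coming from triangles, and characterize equality via Radon curves), but the decisive estimate is never correctly established, and at the one point where you try to pin it down the inequality gets turned around. What is actually needed, per edge with direction $v$, outer normal $u$ and Euclidean length $l$, is $w_K(u)\,l_K(v)\geq \area(\square(K))$, where $l_K(v)$ is the length of a longest chord of $K$ in direction $v$; this holds because $w_K(u)\,l_K(v)$ is precisely the area of the minimal circumscribed parallelogram of $K$ having a pair of sides parallel to $v$. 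Combined with the identity that the relative length of the edge is $l/r(v)$, $r(v)=\frac12 l_K(v)$ being the radius of $\frac12(K-K)$ in direction $v$, this gives $\frac12 l\,w_K(u)\geq\frac{\area(\square(K))}{4}\cdot\frac{l}{r(v)}$ and hence the constant $8$ when $K$ is symmetric (or when $Q$ is symmetric, after pairing opposite edges so that $h_K(x_i)+h_K(-x_i)=w_K(x_i)$, which also disposes of the origin-dependence you worry about). In your text this fact is garbled: you write the area of the circumscribed parallelogram as $w_K(u)\cdot|\hat e|_{\frac12(K-K)}$ (a width times the norm of a unit vector is not that area, which is why you conclude the inequality points the wrong way), and the repair you then propose, namely $\frac12(K-K)\subseteq$ a circumscribed parallelogram, bounds $|\hat e|_{\frac12(K-K)}$ from \emph{below}, not from above (a containment $A\subseteq B$ gives $|\cdot|_A\geq|\cdot|_B$), so it cannot supply the upper bound you say you need; moreover, after normalizing $\square(K)=[0,1]^2$ one has $\frac12(K-K)\subseteq[-\frac12,\frac12]^2$ and, by Remark~\ref{rem:parallelogram}, $\area\bigl(\square\bigl(\frac12(K-K)\bigr)\bigr)=\area(\square(K))=1$, so your displayed identification of $\square\bigl(\frac12(K-K)\bigr)$ with $[-1,1]^2$ (area $4$) is false.

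The second genuine gap is part (\ref{lem:Radon}.1). The factor $\frac{12}{8}=\frac32$ does not come from the inscribed quadrilateral having half the area of the square (that is the mechanism behind $\delta_{sep}(K)\geq\frac12$ in Theorem~\ref{cor:Fary}, a different statement), and your proposal contains no working substitute. The paper's mechanism is a choice of origin: placing $o$ at the center of a maximal-area inscribed triangle of $K$ gives $-\frac12K\subseteq K$, hence $\frac23\cdot\frac12(K-K)\subseteq K$, hence $h_K(x)\geq\frac23 h_{\frac12(K-K)}(x)$ for all $x$ and therefore $A(Q,K)\geq\frac23\,A\bigl(Q,\frac12(K-K)\bigr)$; applying the symmetric case (\ref{lem:Radon}.2) to $\frac12(K-K)$, whose minimal circumscribed parallelogram has the same area, yields the constant $12=\frac32\cdot 8$. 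This step simultaneously legitimizes the edge-by-edge use of $h_K$ for non-symmetric $K$ and $Q$ (the very issue you flag but do not resolve) and produces the correct constant; without it the general case does not follow. Your outline of the equality discussion (Radon curves via the constancy of $l_K(v)\,w_K(v^{\perp})$, and $Q=K$ a triangle by direct computation) matches the paper's treatment in spirit, but it only becomes meaningful once the inequalities themselves are proved.
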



\begin{proof}
Without loss of generality, we may assume that $\area(\square(K))=1$.

Let $k$ denote the number of sides of $Q$, and for $i=1,2,\ldots, k$, let $l_i$ and $x_i$ denote the (Euclidean) length and the outer unit normal vector of the $i$th side of $Q$. Note that then for every value of $i$, $w_K(x_i) = h_K(x_i)+h_K(-x_i)$, where $w_K(x_i)$ is the width of $K$ in the direction of $x_i$ and $h_K(x)=\sup \{x\cdot k\ :\ k\in K\}$ is the support function of $K$ evaluated at $x\in\Eu^2$ with "$\cdot$" standing for the standard inner product of $\Eu^2$.
Furthermore, observe that
\begin{equation}\label{eq:mixedarea}
A(Q,K)= \frac{1}{2} \sum_{i=1}^k l_i h_K(x_i).
\end{equation}

First, we prove (\ref{lem:Radon}.2) for the case that $K$ is centrally symmetric. Since a translation of $K$ or $Q$ does not change their mixed area, we may assume that $K$ is $o$-symmetric, which implies that $h_K(x_i)= \frac{1}{2} w_K(x_i)$ for every $i$. Let $r_i$ be the Euclidean length of the radius of $K$ in the direction of the $i$th side of $Q$. Then the normed
length of this side is $\frac{l_i}{r_i}$. On the other hand, since $2r_iw_K(x_i)$ is the area of a parallelogram circumscribed about $K$ having minimum area under the condition that it has a side parallel to the $i$th side of $Q$, therefore for every value of $i$ we have $r_i w_K(x_i) \geq \frac{1}{2}$. Combining these observations and (\ref{eq:mixedarea}), it follows that
\[
A(Q,K) = \frac{1}{4} \sum_{i=1}^k l_i w_K(x_i) = \frac{1}{4} \sum_{i=1}^k \frac{l_i}{r_i} r_i w_K(x_i) \geq \frac{1}{8} \sum_{i=1}^k \frac{l_i}{r_i} = \frac{1}{8} M_K(Q).
\]
Here, equality holds for every convex polygon $Q$ if and only if for any $v \in \mathbb{S}^1=\{x\in\Eu^2\ :\ |x|=1\}$, there is a minimum area parallelogram circumscribed about $K$, which has a side parallel to $v$. In other words,
for any $v \in \mathbb{S}^1$, we have that $l_K(v) w_K(v^{\perp})$ is independent of $v$, where $l_K(v)$ is the length of a longest chord of $K$ in the direction of $v$, and $w_K(v^{\perp})$ is the width of $K$ in the direction perpendicular to $v$.
The observation that this property is equivalent to the fact that $\bd K$ is a Radon curve can be found, for example, in the proof of Theorem 2 of \cite{GHorvathLangi}.

Now consider the case that $Q$ is $o$-symmetric, but $K$ is not necessarily. Note that in this case $k$ is even, and for every $i$ we have
$l_{i+k/2} = l_i$, and $x_{i+k/2}=-x_i$. Thus, by (\ref{eq:mixedarea})
\[
A(Q,K) = \frac{1}{2}\sum_{i=1}^{k/2}l_i(h_K(x_i)+h_K(-x_i)) = \frac{1}{2} \sum_{i=1}^{k/2} l_i w_K(x_i) = \frac{1}{4} \sum_{i=1}^k l_i w_K(x_i).
\]
From this equality, the statement follows by a similar argument using the relative norm of $K$ whenever $K$ is not centrally symmetric.

Finally we prove (\ref{lem:Radon}.1) about the general case.
Let $\bar{K}= \frac{1}{2}(K-K)$. Without loss of generality, we may assume that the origin $o$ is the center of a maximum area triangle inscribed in $K$. Then, clearly, $-\frac{1}{2}K \subseteq K$, from which a simple algebraic transformation yields that $\frac{2}{3}\bar{K} \subseteq K$.
This implies that for any unit vector $x$ we have
\begin{equation}\label{eq:asymmRadon}
h_K(x) \geq \frac{2}{3} h_{\bar{K}}(x).
\end{equation}
Then, by (\ref{eq:mixedarea}), we have
\[
A(Q,K) \geq  \frac{1}{3} \sum_{i=1}^k l_i h_{\bar{K}}(x_i) = \frac{2}{3} A(Q,\bar{K}).
\]
Thus, our inequality readily follows from (\ref{lem:Radon}.2). The fact that here equality holds if $Q=K$ is a triangle can be shown by an elementary computation.
\end{proof}

First, we prove (\ref{thm:areaformula}.2).
Note that $\bd C$ satisfies the conditions in Theorem~\ref{thm:Oler}, and thus (using Remark~\ref{no-symmetry} if $K$ is not centrally symmetric), we have
\[
\frac{\area (C)}{\area \left(\square (K)\right)} + \frac{M_K(\bd C)}{4} + 1  \geq n.
\]
Thus, $(\ref{lem:Radon}.2)$ of Lemma~\ref{lem:Radon} yields that
\[
\frac{\area (C)}{\area \left(\square (K)\right)} + \frac{2 A(C,K)}{\area \left(\square (K)\right)} + 1  \geq n.
\]
From this, it follows that
\[
\area \left(\conv \left( \bigcup_{i=1}^n (c_i+K) \right)\right)= \area (C+K)=\area (C) + 2 A(C,K) + \area (K)\geq
\]
\[
(n-1) \area \left(\square (K)\right) + \area (K).
\]

Now we prove (\ref{thm:areaformula}.1).
In this case, Theorem~\ref{thm:Oler} applied to $\bd C$ in the same way as above followed by $(\ref{lem:Radon}.1)$ of Lemma~\ref{lem:Radon} implies that
\[
(n-1) \area(\square(K)) \leq \area(C) + 3A(C,K) = \frac{3}{2} \area(C+K) - \frac{1}{2} \area(C) - \frac{3}{2} \area(K).
\]
This inequality yields
\[
\area(C+K) \geq \frac{2(n-1)}{3} \area(\square(K)) + \area(K) + \frac{1}{3} \area(C),
\]
finishing the proof of (\ref{thm:areaformula}.1).
\end{proof}

\section{On the smallest area convex hull of totally separable translative finite soft packings}\label{4}

The following notion has been defined for Euclidean balls in $\Eu^d$ in \cite{BezdekLangi}.

\begin{Definition}
Let $K$ be an $o$-symmetric convex domain in $\Eu^2$. Let $\lambda \geq 0$, and let $K^{\lambda}$ denote the soft domain $(1+\lambda) K$ with the soft parameter $\lambda$, hard core $K$, and soft annulus $(1+\lambda)K \setminus K$ in $\Eu^2$.
\end{Definition}

\begin{Remark}
Clearly, $K^{\lambda}$ and $K^{\lambda} \setminus K$ are symmetric about $o$ in $\Eu^2$.
\end{Remark}

\begin{Definition}
Let $\{ c_1, c_2, \ldots, c_n \} \subseteq \Eu^2$. We say that $\{ c_1 + K^{\lambda}, c_2 + K^{\lambda}, \ldots, c_n + K^{\lambda} \}$
is a \emph{totally separable soft packing} of $n$ translates of the soft domain $K^{\lambda}$ in $\Eu^2$, if
$\{ c_1 + K, c_2+K, \ldots, c_n + K\}$ is a totally separable packing in the usual sense (see Definition \ref{defn:totallyseparable}). Let $\P^{sep}_{K,n,\lambda}$ be the family of all totally separable soft packings of $n$ translates of the soft domain $K^{\lambda}$ for given $K$, $n > 1$, $\lambda \geq 0$.
\end{Definition}

The following statement is an extension of (\ref{thm:areaformula}.2) of Theorem~\ref{thm:areaformula} and also it is a totally separable version of Theorem 2.1 in \cite{BetkeHenkWills}.

\begin{Theorem}\label{thm:Betke}
Let $K$ be an $o$-symmetric convex domain in $\Eu^2$, and let $n > 1$ and $\lambda \geq 0$ be given.
If $\{ c_1 + K^{\lambda}, c_2 + K^{\lambda}, \ldots, c_n + K^{\lambda} \} \in \P^{sep}_{K,n,\lambda}$, then
\[
\area \left( \conv \left( \bigcup_{i=1}^n (c_i + K^{\lambda}) \right) \right) \geq
\]
\[
(n-1)\area \left(\square(K)\right)+2\lambda A(\conv \{c_1,c_2,\ldots,c_n \}, K)+(1+\lambda)^2 \area (K) \geq
\]
\[
\left( n-1 + \frac{\lambda}{4} M_K\left(\bd(\conv \{ c_1,c_2,\ldots, c_n\})\right) \right) \area \left(\square(K)\right) + (1+\lambda)^2 \area(K).
\]
\end{Theorem}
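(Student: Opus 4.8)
The plan is to mimic the proof of (\ref{thm:areaformula}.2) of Theorem~\ref{thm:areaformula}, but carry the soft parameter through the computation. Write $C = \conv\{c_1,c_2,\ldots,c_n\}$. Since $\{c_1+K,\ldots,c_n+K\}$ is a totally separable packing, $\bd C$ satisfies the hypotheses of Theorem~\ref{thm:Oler}, and $K$ is $o$-symmetric, so directly
\[
\frac{\area(C)}{\area(\square(K))} + \frac{M_K(\bd C)}{4} + 1 \geq n.
\]
The first step is then to expand $\conv(\bigcup_{i=1}^n (c_i+K^\lambda))$ using the fact that $K^\lambda = (1+\lambda)K$ is a translate-invariant dilate: we get
\[
\conv\left(\bigcup_{i=1}^n (c_i + (1+\lambda)K)\right) = C + (1+\lambda)K,
\]
and hence by the mixed-area (Minkowski) expansion of the area of a sum of two plane convex bodies,
\[
\area(C+(1+\lambda)K) = \area(C) + 2(1+\lambda)A(C,K) + (1+\lambda)^2\area(K).
\]

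The second step is to combine these two facts to get the first (middle) inequality in the statement. From Oler's inequality above, $\area(C) \geq (n-1)\area(\square(K)) - \tfrac14 M_K(\bd C)\,\area(\square(K))$, and by (\ref{lem:Radon}.2) of Lemma~\ref{lem:Radon}, $\tfrac14 M_K(\bd C)\,\area(\square(K)) \leq 2A(C,K)$. Substituting,
\[
\area(C) \geq (n-1)\area(\square(K)) - 2A(C,K).
\]
Plugging this lower bound for $\area(C)$ into the Minkowski expansion gives
\[
\area(C+(1+\lambda)K) \geq (n-1)\area(\square(K)) - 2A(C,K) + 2(1+\lambda)A(C,K) + (1+\lambda)^2\area(K),
\]
and $-2A(C,K)+2(1+\lambda)A(C,K) = 2\lambda A(C,K)$, which is exactly the first displayed inequality of Theorem~\ref{thm:Betke}. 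The third step is the second (last) inequality, which is immediate: apply (\ref{lem:Radon}.2) of Lemma~\ref{lem:Radon} once more, now to bound $2\lambda A(C,K)$ from below by $\tfrac{\lambda}{4} M_K(\bd C)\,\area(\square(K))$ (here we use $\lambda \geq 0$ so the inequality direction is preserved), giving
\[
(n-1)\area(\square(K)) + 2\lambda A(C,K) + (1+\lambda)^2\area(K) \geq \left(n-1+\frac{\lambda}{4}M_K(\bd C)\right)\area(\square(K)) + (1+\lambda)^2\area(K).
\]

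I do not expect a serious obstacle here: every ingredient — Oler's inequality for totally separable packings, the Minkowski area expansion, and both directions of the mixed-area bound in Lemma~\ref{lem:Radon} — is already available, and the argument is just careful bookkeeping of the $\lambda$-dependent terms. The one point to be slightly careful about is making sure the two applications of Lemma~\ref{lem:Radon}.2 are used in the correct directions (once to lower-bound $\area(C)$ via an upper bound on $M_K(\bd C)$, once to lower-bound $2\lambda A(C,K)$), and that $C$ is genuinely a convex polygon so Lemma~\ref{lem:Radon} applies (it is, being the convex hull of finitely many points, possibly degenerate to a segment, in which case $\area(C)=0$ and all inequalities still hold). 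When $n=2$ one should note $C$ may be a segment, and the statement remains valid with $A(C,K) = \tfrac12 M_K(\bd C)\cdot(\text{appropriate width})$ handled by continuity/approximation, exactly as in Lemma~\ref{lem:Radon}.
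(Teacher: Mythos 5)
Your proposal is correct and follows essentially the same route as the paper: the mixed-area expansion of $\area(C+(1+\lambda)K)$, the bound $\area(C)\geq (n-1)\area(\square(K))-2A(C,K)$, and a final application of (\ref{lem:Radon}.2) for the last inequality. The only cosmetic difference is that you re-derive this bound on $\area(C)$ directly from Theorem~\ref{thm:Oler} and Lemma~\ref{lem:Radon}, whereas the paper obtains it by citing (\ref{thm:areaformula}.2), whose proof is exactly that argument.
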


\begin{Remark}
We note that equality in Theorem~\ref{thm:Betke} is attained, for example, for "sausages" in the form $\{ c_i + K : i=1,2,\ldots, n\}$, where $|c_2-c_1|_K=\dots =|c_n-c_{n-1}|_K=2$, and $c_2-c_1,\dots , c_n-c_{n-1}$ are parallel to a chosen side of $\square (K)$.
\end{Remark}

\begin{proof}
First, we prove the inequality stated first. By the definition of mixed area (cf. e.g. \cite{BetkeHenkWills}), we have
\[
\area\left(\conv \left( \bigcup_{i=1}^n (c_i + K)\right)\right) =\area(\conv \{c_1,c_2,\ldots,c_n \}+K)=
\]
\[
\area(\conv \{c_1,c_2,\ldots,c_n \})+2A(\conv \{c_1,c_2,\ldots,c_n \}, K) + \area(K).
\]
Thus, (\ref{thm:areaformula}.2) of Theorem~\ref{thm:areaformula} implies that
\begin{equation}\label{eq:convexhullarea}
\area(\conv \{c_1,c_2,\ldots,c_n \}) \geq (n-1)\area(\square (K))-2A(\conv \{c_1,c_2,\ldots,c_n \}, K).
\end{equation}
Again using the definition of mixed area and also (\ref{eq:convexhullarea}) we get that
\[
\area\left(\conv \left( \bigcup_{i=1}^n (c_i + K^{\lambda})\right)\right) =\area(\conv \{c_1,c_2,\ldots,c_n \}+K^{\lambda})=
\]
\[
\area(\conv \{c_1,c_2,\ldots,c_n \}) +2(1+\lambda) A(\conv \{c_1,c_2,\ldots,c_n \}, K) + (1+\lambda)^2 \area(K)\geq
\]
\[
(n-1)\area(\square (K))+2\lambda A(\conv \{c_1,c_2,\ldots,c_n \}, K)+(1+\lambda)^2 \area (K)
\]
finishing the proof of the first inequality. Finally, (\ref{lem:Radon}.2) of Lemma~\ref{lem:Radon} implies the second inequality of Theorem~\ref{thm:Betke} in a straightforward way.
\end{proof}

\bigskip

\section{On the covering ratio of totally separable soft disk packings}\label{5}

Let $B$ denote the circular disk of radius $1$ (in short, the unit disk) centered at $o$ in $\Eu^2$.

\begin{Definition}
Let $\F = \{ c_i + B : c_i\in\Eu^2\ {\text for}\ i \in \N\}$ be a totally separable packing (resp., lattice packing) of unit disks in $\Eu^2$. Then $\F^{\lambda} = \{ c_i + (1+\lambda)B : i \in \N\}$
is called a \emph{totally separable soft packing} (resp.,  \emph{totally separable soft lattice packing}) of the \emph{soft disks}  $c_i + (1+\lambda)B$ each being congruent to the soft disk $B^{\lambda}=(1+\lambda)B$ with \emph{soft parameter} $\lambda > 0$.
In this case the \emph{(upper) covering ratio} of the soft packing $\F^{\lambda}$ is defined as 
\[
\rho(\F^\lambda) = \limsup_{r \to \infty} \frac{\area \left( r B \cap \bigcup_{i\in\N } (c_i+B^{\lambda})\right)}{\area (r B)}.
\]
We denote by $\rho_{\lambda, B}^{sep}$ (respectively, $\rho_{\lambda, B}^{sep, lattice}$) the supremum of the (upper) covering ratios over the family of totally separable soft packings (respectively, of totally separable soft lattice packings) of soft disks congruent to $B^{\lambda}$ with soft parameter $\lambda > 0$.
\end{Definition}

We note that in \cite{BezdekLangi} the covering ratio just introduced was called soft density. We prefer to use the term covering ratio in order to emphasize that it means the fraction of plane covered by the soft elements of the given soft packing.
To state our main result in this section, for $\frac{\pi}{6} \leq \alpha \leq \frac{\pi}{4}$, we denote by $T_{\alpha}$  an isosceles triangle whose half angle at its apex is $\alpha$, and the two heights starting at the endpoints of its base are equal to two (cf. Figure~\ref{fig:Talpha}). Observe that if the vertices of this triangle are $p_1,p_2,p_3$, then the triple $\{ p_1+B, p_2 + B, p_3+B\}$ is totally separable, and the vectors $p_2-p_1$, $p_3-p_1$ generate a totally separable lattice packing of translates of $B$.
We introduce the notation

\[
\rho(\lambda,T_{\alpha}) = \frac{\area \left( T_{\alpha} \cap \bigcup_{i=1}^3 (p_i+ B^{\lambda} ) \right)}{\area(T_{\alpha})}.
\]

\begin{figure}[ht]
\begin{center}
\includegraphics[width=0.4\textwidth]{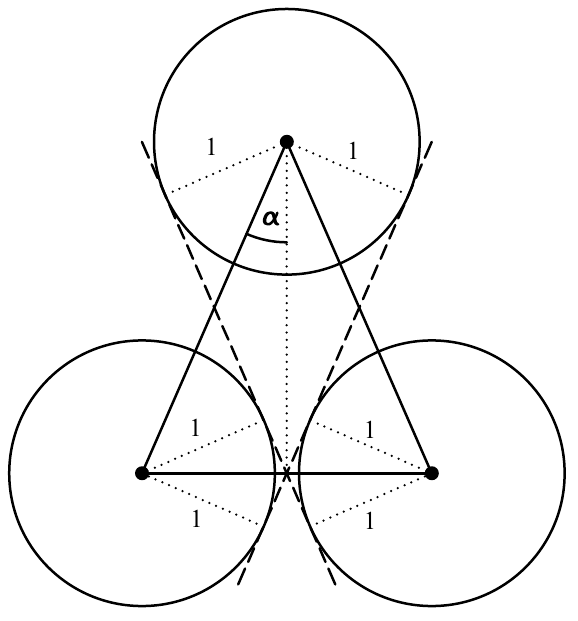}
\caption{The isosceles triangle $T_{\alpha}$ for some value of $\alpha$}
\label{fig:Talpha}
\end{center}
\end{figure}

\begin{Theorem}\label{thm:soft_packing}
For every $\lambda > 0$, we have $\rho_{\lambda,B}^{sep}=\rho_{\lambda,B}^{sep,lattice} = \rho(\lambda,T_{\alpha})$ for some $\frac{\pi}{6} \leq \alpha \leq \frac{\pi}{4}$.
\end{Theorem}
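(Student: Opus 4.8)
The plan is to establish the two equalities $\rho_{\lambda,B}^{sep}=\rho_{\lambda,B}^{sep,lattice}$ and $\rho_{\lambda,B}^{sep,lattice}=\rho(\lambda,T_\alpha)$ for a suitable $\alpha$, in a way parallel to how the density problem was reduced to a finite packing inequality in Section~\ref{2}. The first observation is that every totally separable soft disk packing has covering ratio at most $\rho_{\lambda,B}^{sep}$ by definition, so it suffices to bound $\rho(\F^\lambda)$ from above by $\rho(\lambda,T_\alpha)$ for \emph{every} totally separable soft packing $\F^\lambda$, and then exhibit a totally separable \emph{lattice} packing generated by $T_\alpha$ achieving this value; the lattice packing exists exactly because, as noted just before the statement, the vectors $p_2-p_1,p_3-p_1$ of $T_\alpha$ generate a totally separable lattice packing of translates of $B$. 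This shows simultaneously $\rho_{\lambda,B}^{sep}\le \rho(\lambda,T_\alpha)\le \rho_{\lambda,B}^{sep,lattice}\le\rho_{\lambda,B}^{sep}$, forcing all three to coincide.

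The heart of the matter is therefore the upper estimate. First I would fix a large radius $r$ and look at the sub-family $\F_r$ of hard disks $c_i+B$ meeting $rB$; as in the proof of Theorem~\ref{cor:FeFe}, all these hard disks lie in $(r+C)B$ for an absolute constant $C$, and $n_r=|\F_r|$ is controlled by Theorem~\ref{thm:Oler} applied to $\bd(\conv\{c_i\})$. The area covered by the soft disks inside $rB$ is at most $\area\!\left(\bigcup_{i}(c_i+B^\lambda)\right)\le \area\!\big(\conv\{c_i\}_{i\in\F_r}+B^\lambda\big)$, and by Theorem~\ref{thm:Betke} (with $K=B$, $\square(B)$ the unit-edge square of area... in our normalization $\area(\square(B))=4$ after scaling, or one simply carries the constant) this is bounded below — hence the \emph{covered} area is bounded above — in terms of $n_r$, $\lambda$, and $M_B(\bd\conv\{c_i\})$. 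Dividing by $\area(rB)=r^2\pi$ and letting $r\to\infty$, the boundary/perimeter terms and the additive constants wash out, leaving
\[
\rho(\F^\lambda)\ \le\ \limsup_{r\to\infty}\frac{n_r\,\big((1+\lambda)^2\area(B)+(\text{per term})\big)}{r^2\pi}.
\]
The key point is that the extremal configuration among totally separable packings is, by Theorem~\ref{cor:Fary} and its equality analysis, a triangular lattice packing; so one reduces, by a standard fundamental-domain / averaging argument over the lattice, to computing the covering ratio inside a single fundamental triangle. The triangle must have its two non-apex heights equal to $2$ (the hard disks of radius $1$ touching the separating lines), which is precisely the family $T_\alpha$, and the constraint that the packing be totally separable forces the apex half-angle $\alpha$ into $[\pi/6,\pi/4]$: at $\alpha=\pi/4$ the lattice is the square lattice, and $\alpha=\pi/6$ is the limiting case where the separating lines through a disk become tangent to the neighbouring disks on the opposite side.

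The main obstacle I anticipate is twofold. First, rigorously passing from the global estimate to "the sup is attained at a triangular lattice" requires more than Theorem~\ref{thm:Betke}: one must argue that among all totally separable packings the asymptotic covered-area-per-disk is maximized by a lattice whose Dirichlet-type cell is a triangle $T_\alpha$ — this needs a careful decomposition of the plane into cells (one per $c_i$) each contained in a translate of $\square(B)$ and controlled via the mixed-area inequality of Lemma~\ref{lem:Radon}, together with the observation that the soft annulus contributes area proportional to the mixed area $A(\text{cell},B)$, exactly the quantity appearing in Theorem~\ref{thm:Betke}. Second, one must verify that the optimal $\alpha$ indeed lies in $[\pi/6,\pi/4]$ rather than at an endpoint for every $\lambda>0$: for small $\lambda$ the soft annuli barely overlap and the ratio $\rho(\lambda,T_\alpha)$ is governed by $\area(T_\alpha)$ relative to $\area(\square(B))$, pushing $\alpha$ toward $\pi/6$ (densest), while for large $\lambda$ the overlaps in the "thin" directions of a skew triangle become wasteful, pulling $\alpha$ toward $\pi/4$; a monotonicity or convexity analysis of $\lambda\mapsto\arg\max_\alpha\rho(\lambda,T_\alpha)$ is required, but the statement only claims existence of \emph{some} $\alpha$ in the interval, so it suffices to show the maximizer over $\alpha\in[\pi/6,\pi/4]$ beats any configuration with $\alpha$ outside that range — which follows from the totally-separable constraint itself — and to invoke continuity of $\rho(\lambda,T_\alpha)$ in $\alpha$ to obtain the maximum on the compact interval.
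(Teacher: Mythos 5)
Your overall skeleton (bound every totally separable soft packing above by $\rho(\lambda,T_\alpha)$, then exhibit the lattice generated by $T_\alpha$ to force $\rho_{\lambda,B}^{sep}\le\rho(\lambda,T_\alpha)\le\rho_{\lambda,B}^{sep,lattice}\le\rho_{\lambda,B}^{sep}$) agrees with the paper, but the step you yourself flag as ``the main obstacle'' is precisely the entire content of the paper's proof, and the route you sketch toward it does not work. The global inequalities you invoke --- Theorem~\ref{thm:Oler} to count hard disks and Theorem~\ref{thm:Betke} for the convex hull of a soft packing --- only control the number of centers per unit area together with perimeter/mixed-area terms; they see nothing of the overlaps of the soft annuli or of the shape of the uncovered gaps, and these matter at first order: already in the square-lattice packing, soft disks whose centers are at distance $2$ overlap for every $\lambda>0$, so the quantity obtained by dividing $n_r(1+\lambda)^2\area(B)$ by $\area(rB)$ is strictly larger than $\rho(\lambda,T_{\pi/4})$ and is never the sharp constant. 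Moreover, the appeal to ``Theorem~\ref{cor:Fary} and its equality analysis'' to conclude that the extremal configuration is a (triangular) lattice is a misreading: that theorem characterizes the extremal \emph{domains} $K$ (triangle, parallelogram) for the density $\delta_{sep}(K)$, not the extremal \emph{packings} of a fixed disk, so it cannot justify the reduction to a fundamental triangle --- that reduction presupposes the lattice structure you are trying to establish.

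What the paper actually does is entirely local. It first enlarges the class to weakly separable packings and saturates the center set, then builds the Delaunay tessellation, modifies it into Moln\'ar's tessellation (each separating side replaced by a bridge through the neighbouring circumcenter), and refines it into cells that are either acute triangles of circumradius less than $\sqrt{2}$ or sets of the form $\cl\left(\conv\{v,c_i,c_j\}\setminus\conv\{v',c_i,c_j\}\right)$. Lemma~\ref{lem:Molnarisgood} (the closest center to any point of a cell is a vertex of that cell) guarantees that inside each cell only the soft disks centered at its vertices contribute, and then two monotonicity results --- Lemma~\ref{lem:isosceles} for the isosceles-type cells and Lemma~\ref{lem:acute} (a symmetrization at fixed circumradius plus an explicit derivative estimate) for acute triangles with two heights at least $2$ --- show that the covering ratio of every cell is at most $\rho(\lambda,T_\alpha)$ with $\alpha\in[\pi/6,\pi/4]$; averaging over the cells gives the global upper bound, and the lattice generated by $T_\alpha$ attains it. None of these ingredients (the tessellation, the closest-vertex lemma, the two triangle lemmas) appear in your proposal, and the alternative you gesture at --- cells ``controlled via the mixed-area inequality of Lemma~\ref{lem:Radon}'' --- would again only measure areas and Minkowski perimeters, not the actual region covered by the soft annuli. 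So the proposal has a genuine gap at its central step.
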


The proof is based on a refinement of a tessellation defined by Moln\'ar in \cite{Molnar}.
Let $C=\{c_i : i\in\N\}\subseteq \Eu^2$ be a \emph{saturated} point set in $\Eu^2$, that is, assume that there are some values $0 < a \leq b$ such that the distance of any two points in $C$ is at least $a$, and for any $x \in \Eu^2$, $|x-c_i| < b$ for some $c_i \in C$. For any $c_i \in C$, let the \emph{Voronoi cell} $D_i$ of $c_i$ be the set of points in $\Eu^2$ not farther from $c_i$ than from any other element of $C$. If for any cells $D_i$ and $D_j$ meeting at an edge, this edge is replaced by the segment $[c_i,c_j]$, we obtain the \emph{Delaunay} tessellation of $\Eu^2$. 
In this tessellation, the circumcenter of any cell is a vertex of some Voronoi cells, and the circumcircle of any Delaunay cell contains no point of $C$ in its interior. (Moreover, the circumcircle of any Delaunay cell contains no point of $C$ different from the vertices of the cell.)

Let the circumcenter of a cell $P$ of the Delaunay decomposition be $v$. If the line through an edge $S = [c_i,c_j]$ of $P$ separates $P$ and $v$, we say that $S$ is a \emph{separating side} of $P$. Then the polygonal curve $[c_i,v] \cup [v,c_j]$ is called the \emph{bridge} of $P$. Clearly, every cell $P$ has at most one bridge.
Let us replace the separating side of each cell (if it exists), by the bridge of the cell. Then, by Lemma 1 of \cite{Molnar}, we obtain another cell decomposition of $\Eu^2$, which we call \emph{Moln\'ar} tessellation or in short, $M$-tessellation.

\begin{proof}
We prove the theorem for a larger family of packings which we call \emph{weakly separable} packings of unit disks: we assume only that any three unit disks in the packing $\F$, under the condition that the pairwise distances between their centers are at most $2\sqrt{2}$, form a totally separable triple.
Observe that if $\F = \{ c_i + B : c_i \in C\}$ is a weakly separable packing of unit disks, and there is some point $p \in \Eu^2$ such that $|p-c_i| > 2\sqrt{2}$ for all $c_i\in C$, then after adding the circle $p + \BB$ to the packing it remains weakly separable.
Thus, we may assume that $C$ is saturated, and the circumradius of any Voronoi cell of $C$ is at most $2\sqrt{2}$.
In the proof we let $\F^{\lambda} = \{ c_i + B^{\lambda} : c_i \in C\}$, and for any region $Q$ in the plane, $\rho(\F^{\lambda} | Q) = \frac{\area(Q \cap \bigcup \F^{\lambda})}{\area(Q)}$. 

Let $P$ be an arbitrary Delaunay cell of $C$. Assume that the circumradius of $P$ is less than $\sqrt{2}$. Then, since the sides of $P$ are at least $2$, $P$ is an acute triangle, which, thus, contains its circumcenter. Hence, if $P$ has a separating side $S$, then it separates the Delaunay cell $P'$, meeting $P$ in $S$, from the circumcenter $v'$ of $P'$. On the other hand, since the circumcircle of $P'$ does not contain vertices of $P$ in its interior, it follows that the circumradius of $P'$ is not greater than that of $P$. This yields that the circumradius of $P'$ is less than $\sqrt{2}$, which contradicts our assumption that a side of $P'$ separates $v'$ from $P'$. In other words, we have that if the circumradius of $P$ is less than $\sqrt{2}$, then it is a triangle which remains the same in the $M$-tessellation as well.

We show that any other $M$-cell can be decomposed into cells of the form
\[
Q = \cl \left( \conv \{ v,c_i,c_j\} \setminus \conv \{ v',c_i,c_j\} \right),
\]
where $c_i, c_j \in C$, $|v-c_i| = |v-c_j| \geq \sqrt{2}$, and $|v'-c_i| = |v'-c_j|$.
Let $P$ be a Delaunay cell of $C$ with circumradius at least $\sqrt{2}$. Assume, first, that $P$ contains its circumcenter $v$. Thus, if $[c_i,c_j]$ is a separating side, then it separates the cell $P'$ which meets $P$ in $[c_i,c_j]$, from its circumcenter $v'$.
Then $[c_i,c_j]$ is replaced by the bridge $[c_i,v'] \cup [v',c_j]$. Note that $\sqrt{2}\leq |v'-c_i| = |v'-c_j| < |v-c_i|=|v-c_j| $. If $[c_i,c_j]$ is not a separating side, then one can choose $v'$ to be the midpoint of $[c_i,c_j]$. Thus, dissecting the $M$-cell obtained from $P$ by the segments connecting $v$ to the vertices of $P$ results in regions with the desired property.
If $P$ does not contain its circumcenter, we may apply a similar construction.
We call this tessellation the \emph{refined} $M$-tessellation, or $M'$-tessellation. Then, if $P$ is an $M'$-cell, then $P$ is either
\begin{itemize}
\item[(i)] an acute triangle with circumradius less than $\sqrt{2}$, in this case we say that $P$ is type 1, or
\item[(ii)] it is of the form $P=\cl \left( \conv \{ v,c_i,c_j\} \setminus \conv \{ v',c_i,c_j\} \right)$, where $c_i,c_j \in C$, $v$ is the circumcenter of a Delaunay cell with $c_i$ and $c_j$ as vertices and with circumradius at least $\sqrt{2}$, and $|v'-c_i| = |v'-c_j|$. In this case we say that $P$ is type 2.
\end{itemize}

\begin{Lemma}\label{lem:Molnarisgood}
Let $P$ be an $M'$-cell defined by $C$. Then, for any point $p \in \inter P$, if $c_i \in C$ is closest to $p$, then $c_i$ is a vertex of $P$.
\end{Lemma}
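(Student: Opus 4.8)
The plan is to verify the claim separately for the two types of $M'$-cells, using the Delaunay/Voronoi properties recorded before the lemma. First suppose $P$ is type 1, i.e. an acute triangle $P = \conv\{c_a, c_b, c_c\}$ with circumradius less than $\sqrt 2$ which, as noted above, survives unchanged in the $M$- and $M'$-tessellations. Then $P$ is an ordinary Delaunay triangle, so its circumscribed disk contains no point of $C$ in its interior (indeed no point of $C$ other than $c_a, c_b, c_c$). For $p \in \inter P$, I would argue that the point of $C$ nearest to $p$ must be one of $c_a, c_b, c_c$: if some $c_j$ with $j \notin \{a,b,c\}$ were strictly closer to $p$ than all three vertices, then $p$ would lie in the open Voronoi cell $D_j$, hence also in the Delaunay cell around $c_j$; but $p \in \inter P$ and Delaunay cells have disjoint interiors, a contradiction. (Equivalently: the disk centered at $p$ through the nearest vertex of $P$ is contained in the circumdisk of $P$, which is empty of other points of $C$.) Since ties on $\bd D_j$ still give a vertex of $P$ among the nearest points, the conclusion holds.

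Now suppose $P$ is type 2, so $P = \cl\bigl(\conv\{v, c_i, c_j\} \setminus \conv\{v', c_i, c_j\}\bigr)$, where $v$ is the circumcenter of a Delaunay cell having $c_i, c_j$ among its vertices, $|v - c_i| = |v - c_j| \ge \sqrt 2$, and $|v' - c_i| = |v' - c_j|$ with $v'$ on the segment from $v$ toward (and not beyond) the chord $[c_i, c_j]$. The key geometric fact I would use is that every point $p$ of $P$ lies inside the circumdisk $D$ of radius $|v - c_i|$ centered at $v$: indeed $\conv\{v, c_i, c_j\} \subseteq D$ since $v$ is its center and $c_i, c_j$ are on $\bd D$, and removing $\conv\{v', c_i, c_j\}$ only shrinks the region. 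That circumdisk $D$, being the circumdisk of a Delaunay cell, contains no point of $C$ in its interior and no point of $C$ on its boundary except the vertices of that Delaunay cell (among which are $c_i$ and $c_j$). Hence for $p \in \inter P$, the open disk $D(p, r)$ with $r = \min_k |p - c_k|$ is contained in $D$ (its radius is at most $|p - c_i|$, say, and $c_i \in \bd D$, so $D(p,r)$ stays inside $D$), which forces the nearest point(s) of $C$ to $p$ to lie on $\bd D \cap C$, i.e. among the Delaunay-cell vertices. It then remains to rule out Delaunay vertices other than $c_i$ and $c_j$: I would observe that such a third vertex $c_k$ of the originating Delaunay cell lies on the far side of the line through $[c_i,c_j]$ from $v'$ (and of the relevant arc), so that no point $p$ of $P$ — which lies on the $v$-side, on or outside the chord $[c_i,c_j]$ within that circular segment — can have $c_k$ as a strictly nearest point; the nearest must be $c_i$ or $c_j$.

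The step I expect to be the main obstacle is the last one: carefully pinning down, in the type-2 case, the exact position of $P$ relative to the chord $[c_i, c_j]$, the circumcenter $v$, and any other vertices of the originating Delaunay cell, so as to exclude a spurious third nearest vertex. This requires a clean description of which circular segment of $D$ the region $P$ occupies — the one cut off by the chord $[c_i,c_j]$ on the side containing $v$ — and the observation that the perpendicular bisectors of $[c_i, c_k]$ and $[c_j, c_k]$ keep that whole segment strictly within $\{x : |x - c_i| < |x - c_k|\} \cup \{x : |x - c_j| < |x - c_k|\}$, which follows because $c_k$ lies outside that segment and the empty-circumdisk property forbids $c_k$ from intruding. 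Once the geometry is set up this is elementary, but it is the place where one must be precise rather than wave hands. The type-1 case and the reduction to "nearest point lies on $\bd D$" are, by contrast, immediate consequences of the standard Delaunay emptiness property already invoked in the surrounding text.
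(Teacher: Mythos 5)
There is a genuine gap, and it sits at the heart of both of your cases: the disk--containment step is false. If $v$ is the circumcenter, $R$ the circumradius and $r=\min_k|p-c_k|$, then for any vertex $c$ of the Delaunay cell the triangle inequality gives $|p-c|\geq R-|p-v|$, hence $r+|p-v|\geq R$; so the disk of radius $r$ about $p$ is in general \emph{not} contained in the circumdisk --- it protrudes beyond the circumcircle near the nearest vertex. Consequently the empty-circumdisk property does not force the nearest element of $C$ to lie on the circumcircle, and your type-2 argument collapses at the sentence ``its radius is at most $|p-c_i|$, so $D(p,r)$ stays inside $D$.'' The type-1 argument has the same defect in a different guise: from $p\in D_j$ you conclude that $p$ lies in a Delaunay cell having $c_j$ as a vertex, but a Voronoi cell need not be contained in the union of the Delaunay cells incident to its site once obtuse Delaunay cells are present; this failure is exactly the phenomenon that Moln\'ar's bridges are designed to repair, so your argument essentially assumes the statement being proved. (Note also that the paper's proof of the type-1 case is not the triviality you expect: it genuinely uses the $M$-tessellation, not just the emptiness of the circumdisk of $P$.) Finally, the step you yourself flag as the ``main obstacle'' in type 2 --- excluding a third vertex of the originating Delaunay cell, which may be a polygon with many vertices --- is left unproved, so even granting the containment the argument would be incomplete.

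For comparison, the paper argues by contradiction along the segment $[p,c_i]$: if the nearest point $c_i$ were not a vertex of $P$, this segment meets the interior of some Delaunay cell $P'$ with vertex $c_i$ whose opposite side $[c_j,c_k]$ separates $p$ from $P'$; the inequalities $|c_j-p|,|c_k-p|\geq|c_i-p|$ force an obtuse angle of $P'$ at $c_i$, so $[c_j,c_k]$ is a separating side and carries a bridge $[c_j,v']\cup[v',c_k]$; then the perpendicular bisectors of $[c_i,c_j]$ and $[c_i,c_k]$, which meet at $v'$, show that $p\in\conv\{c_j,c_k,v'\}$, contradicting the fact that bridges cannot cross the sides of the $M'$-cell $P$. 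Any repair of your proposal would have to bring in this bridge structure (or an equivalent global property of the $M'$-tessellation); a purely local ``empty circumdisk'' argument of the kind you sketch cannot suffice.
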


\begin{proof}
Consider the case that $P$ is type 1. Let $c_i \in C$ be closest to $p$.
Let $P'$ be a \emph{Delaunay cell} with $c_i$ as a vertex such that $P'$ intersects $[p,c_i]$. If $[p,c_i]$ does not intersect $\inter P'$, then it is contained in a sideline of $P'$. On the other hand, since $c_i$ is closest to $p$ in $C$, and no side of $P'$ crosses the edges of $P$, this is impossible. Thus, $[p,c_i]$ intersects $\inter P'$, which means that the line $L$ through two other vertices $c_j$, $c_k$ of $P'$ separates $p$ from $P'$. Let $x$ and $y$ denote the intersection points of $L$ with the circle, centered at $p$, of radius $|c_i-p|$. Since $|c_j-p|, |c_k-p| \geq |c_i-p|$, we have $[x,y] \subseteq [c_j,c_k]$. Note that since $L$ separates $c_i$ and $p$, it follows that $\frac{\pi}{2} < \angle(x,c_i,y) \leq \angle(c_j,c_i,c_k)$; that is, $P'$ has an obtuse angle at $c_i$. Hence, if $v'$ denotes the circumcenter of $P'$, then $L$ separates $v'$ and $P'$, or in other words, $[c_j,v'] \cup [c_k,v']$ is a bridge. Since no bridge can cross the sides of $P$,
to finish the proof it suffices to show that $p \in \conv \{ c_j,c_k,v'\}$. Let $L_j$ (respectively, $L_k$) be the line bisecting the segment $[c_i,c_j]$ (respectively, $[c_i,c_k]$). Note that $L_j$ and $L_k$ intersect at $v'$. Let $V$ be the closed convex angular region, with apex $v'$ and $\bd V \subseteq L_j \cup L_k$ such that $c_i \in V$. Note that since $p$ is not farther from $c_i$ than from $c_j$ or $c_k$, we have $p \in V$, which yields that $p \in \conv \{ c_j,c_k,v'\}$ (cf. Figure~\ref{fig:closestpoint}).

\begin{figure}[ht]
\begin{center}
\includegraphics[width=0.35\textwidth]{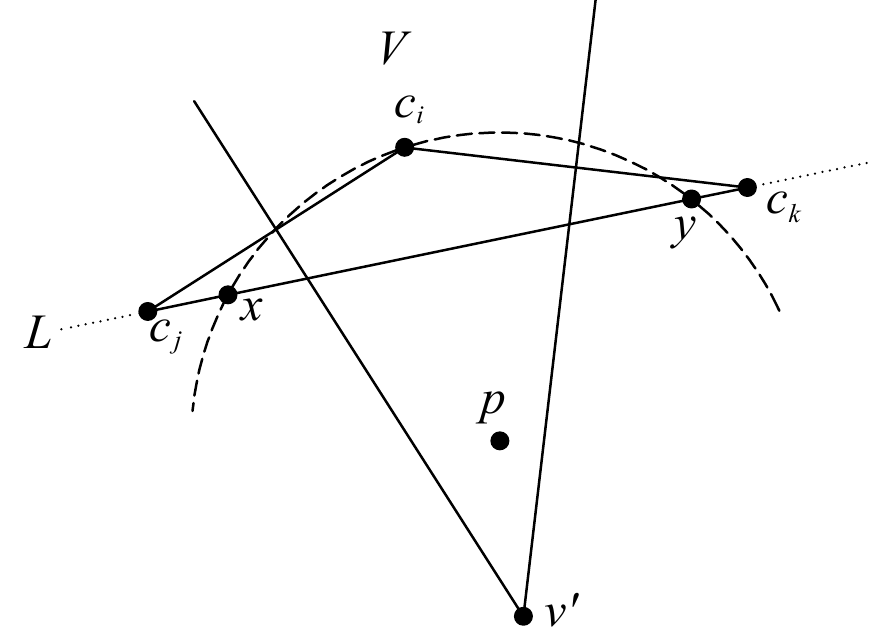}
\caption{An illustration for the proof of Lemma~\ref{lem:Molnarisgood}}
\label{fig:closestpoint}
\end{center}
\end{figure}

If $P$ is type 2, we may apply a similar argument.
\end{proof}

By Lemma~\ref{lem:Molnarisgood} we have that for any cell $P$ in the $M'$-decomposition, if $\F^{\lambda}(P)$ denotes the family of translates of $B^{\lambda}=(1+\lambda)B$ centered at the vertices of $P$ in $C$, then $\rho(\F^{\lambda}(P) | P) = \rho(\F^{\lambda} | P)$.
In other words, to compute the covering ratio of the soft packing in the cell $P$ it suffices to consider the soft disks centered at the vertices of $P$ in $C$.
To finish the proof, we show that for any $M'$-cell $P$, we have $\rho(\F^{\lambda}(P) | P) \leq \rho(\lambda,T_{\alpha})$ for some $\frac{\pi}{6}\leq \alpha \leq \frac{\pi}{4}$.

\emph{Case 1}, $P$ is type 2. To prove the assertion in this case, we need the next lemma.

\begin{Lemma}\label{lem:isosceles}
Let $T=\conv\{p_1,p_2,o\}$ be an isosceles triangle with its apex at $o$, and $2a=|p_1-p_2|$, $b=|p_1-o|$. Assume that $b \geq 1+\lambda$, and let
\[
\rho(a,b)= \frac{\area\left( T \cap \bigcup_{i=1}^2(p_i + B^{\lambda})\right)}{\area(T)}.
\]
Then $\rho(a,b)$ is a strictly decreasing function of both $a$ and $b$.
\end{Lemma}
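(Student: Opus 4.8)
The plan is to analyze the function $\rho(a,b)$ by writing out the numerator and denominator explicitly in terms of $a$ and $b$, and then showing monotonicity in each variable separately. Let me sketch the geometry first. The isosceles triangle $T = \conv\{p_1, p_2, o\}$ has apex at $o$, base $[p_1, p_2]$ of length $2a$, and equal legs of length $b$. The half-angle at the apex is $\alpha$ with $\sin\alpha = a/b$, so $\area(T) = \frac12 \cdot 2a \cdot b\cos\alpha = a\sqrt{b^2 - a^2}$. The two soft disks $p_1 + B^\lambda$ and $p_2 + B^\lambda$ each have radius $1+\lambda$; since $b \ge 1+\lambda$, the center $o$ is not covered, and because the packing of the hard disks is totally separable (the disks $p_i + B$ are non-overlapping and the separating line argument forces $|p_1 - p_2| \ge 2$, i.e. $a \ge 1$), the soft annuli may overlap but the picture inside $T$ is controlled. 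The key observation is that $T \cap (p_i + B^\lambda)$ consists of a circular sector at $p_i$ of radius $1+\lambda$ (whose angle is the interior angle of $T$ at $p_i$) possibly truncated by the opposite edges, but for the relevant range of $\alpha \in [\pi/6, \pi/4]$ and $b$ not too large the two disk-caps inside $T$ are disjoint and each is simply a circular sector cut off by the base. So I would first establish that $T \cap \bigcup_i (p_i + B^\lambda)$ is the disjoint union of two congruent regions, each the intersection of the disk $p_i + (1+\lambda)B$ with the angular wedge of $T$ at $p_i$.

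Next I would set up coordinates: put the base $[p_1,p_2]$ horizontal with midpoint on the $y$-axis, $p_1 = (-a, h)$, $p_2 = (a, h)$, $o = (0,0)$, where $h = \sqrt{b^2 - a^2}$. The interior angle of $T$ at $p_1$ equals $\beta := \pi/2 - \arctan(a/h)$ (the angle between the leg $p_1 o$ and the base). Then $\area(T \cap (p_1 + B^\lambda))$ is the area of a circular sector of radius $1+\lambda$ and angle $\beta$, namely $\tfrac12 (1+\lambda)^2 \beta$, provided $1+\lambda$ is small enough that the sector stays inside $T$ — which holds because $b \ge 1+\lambda$ guarantees the disk reaches at most to $o$ along the leg, and one checks it does not spill over the base or the opposite leg in the stated parameter range. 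Hence
\[
\rho(a,b) = \frac{(1+\lambda)^2\, \beta(a,b)}{a\sqrt{b^2-a^2}}, \qquad \beta(a,b) = \frac{\pi}{2} - \arctan\!\frac{a}{\sqrt{b^2-a^2}}.
\]
Now monotonicity in $b$ (with $a$ fixed) is the easier half: as $b$ increases, the denominator $a\sqrt{b^2-a^2}$ strictly increases, while $\beta(a,b)$ strictly decreases (the argument $a/\sqrt{b^2-a^2}$ of $\arctan$ decreases, so $\arctan$ of it decreases, so $\beta$, being $\pi/2$ minus that, ... wait — decreasing the subtracted term increases $\beta$). Let me recompute: as $b\uparrow$, $\sqrt{b^2-a^2}\uparrow$, so $a/\sqrt{b^2-a^2}\downarrow$, so $\arctan(\cdot)\downarrow$, so $\beta = \pi/2-\arctan(\cdot)\uparrow$. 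So the numerator grows and the denominator grows; this needs a genuine comparison. The clean way is to rescale: fix the apex angle. Actually the slicker substitution is to use $\alpha$ (the half-apex angle) and $b$, since $a = b\sin\alpha$, $h = b\cos\alpha$, $\beta = \pi/2 - \alpha$ depends only on $\alpha$, and $\area(T) = b^2 \sin\alpha\cos\alpha$. Then
\[
\rho = \frac{(1+\lambda)^2(\pi/2-\alpha)}{b^2\sin\alpha\cos\alpha},
\]
which is manifestly strictly decreasing in $b$ for fixed $\alpha$; and the map $(a,b)\mapsto(\alpha,b)$ is such that increasing $a$ at fixed $b$ increases $\alpha$, so it remains to show $\rho$ is strictly decreasing in $\alpha$ on $[\pi/6,\pi/4]$, i.e. that $g(\alpha) := (\pi/2 - \alpha)/(\sin\alpha\cos\alpha) = (\pi/2-\alpha)\cdot \tfrac{2}{\sin 2\alpha}$ is strictly decreasing there. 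Differentiating, $g'(\alpha)$ has the sign of $-\sin 2\alpha - (\pi/2-\alpha)\cdot 2\cos 2\alpha \cdot(-1)\cdot$... I would just compute $\frac{d}{d\alpha}\log g = -\frac{1}{\pi/2-\alpha} - 2\cot 2\alpha$ and show this is negative for $\alpha \in [\pi/6, \pi/4]$: at $\alpha = \pi/4$, $\cot 2\alpha = 0$ and the first term is negative; for $\alpha < \pi/4$, $\cot 2\alpha > 0$, so both terms are negative. Hence $g$ is strictly decreasing, giving strict monotonicity of $\rho$ in $a$ as well (since at fixed $b$, $\alpha$ and $a$ increase together while $b^2$ is constant).

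The main obstacle — the step I expect to require the most care — is not the calculus but the geometric claim that inside $T$ the covered region really is just the disjoint union of two unobstructed circular sectors of angle $\pi/2-\alpha$ and radius $1+\lambda$, i.e. that for the full stated parameter range ($b \ge 1+\lambda$, $\alpha \in [\pi/6,\pi/4]$, $a \ge 1$, $\lambda > 0$ arbitrary) the soft disk at $p_1$ does not protrude through the base $[p_1,p_2]$ beyond $T$, does not reach the opposite leg, and does not overlap the soft disk at $p_2$ inside $T$. One must check: (a) the foot of the perpendicular from $p_1$ to line $op_2$ is at distance $2a\cos\alpha$ ... hmm, actually the relevant non-overlap condition inside $T$: the two sectors at $p_1$ and $p_2$ both lie in the halfplane above-ish and their union is disjoint precisely when $1+\lambda$ does not exceed the distance from $p_i$ to the angle bisector of $T$ through $o$ measured appropriately — this can fail for large $\lambda$. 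In fact for $\lambda$ large the soft disks cover all of $T$ and $\rho = 1$ identically, so the lemma as stated must implicitly restrict to the regime where the formula $\rho = (1+\lambda)^2(\pi/2-\alpha)/(b^2\sin\alpha\cos\alpha)$ is valid, or the hypothesis $b \ge 1+\lambda$ together with the totally separable constraint is exactly what rules out the degenerate cases. I would therefore devote the first portion of the proof to carefully pinning down, using $b \ge 1+\lambda$ and $a \ge 1$ (hence $b \ge \max\{1+\lambda, a/\sin\alpha\}$, in particular $b \ge 1/\sin\alpha \ge \sqrt2$), that the disk $p_1 + (1+\lambda)B$ meets $T$ only in the clean sector; once that geometric lemma is in hand, the monotonicity reduces to the elementary one-variable estimate on $g(\alpha)$ above.
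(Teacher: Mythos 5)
Your proposal covers only part of the lemma: the reduction of $T\cap\bigl((p_1+B^{\lambda})\cup(p_2+B^{\lambda})\bigr)$ to two disjoint full circular sectors of radius $1+\lambda$ is valid only when $a\geq 1+\lambda$, i.e.\ when each soft disk stays on its own side of the perpendicular bisector of the base. The lemma assumes only $b\geq 1+\lambda$, which keeps the apex $o$ uncovered but does not prevent the two soft disks from overlapping; the overlapping case $a<1+\lambda$ is not a degenerate case to be excluded but is genuinely needed in the application (the paper even notes that for $\lambda$ above $\lambda_2\approx 0.194$ the optimal triangles $T_\alpha$ have the two soft disks at the base endpoints overlapping, and total separability only forces $a\geq 1$, which does not bound $\lambda$). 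So your closing suggestion that the hypotheses ``implicitly restrict'' to the clean-sector regime is incorrect. In the overlapping case each set $T\cap(p_i+B^{\lambda})$ is a sector truncated by the bisector, and with $\bar{\lambda}=1+\lambda$ one gets $\rho(a,b)=\bigl(\bar{\lambda}^2\arccos\tfrac{a}{b}-\bar{\lambda}^2\arccos\tfrac{a}{\bar{\lambda}}+a\sqrt{\bar{\lambda}^2-a^2}\bigr)\big/\bigl(a\sqrt{b^2-a^2}\bigr)$; this is exactly where the paper does the real work (explicit partial derivatives, the substitution $b=a/\cos\mu$, $\bar{\lambda}=a/\cos\nu$, and a monotonicity estimate for the resulting numerator), while the non-overlapping case is disposed of by citing the Lemma of \cite{Andras} with a strictness modification. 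Omitting the overlapping case leaves the main content of the lemma unproved.

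Even within the regime where your sector formula is correct, the monotonicity in $b$ does not follow from your two observations. In the coordinates $(\alpha,b)$ with $a=b\sin\alpha$ you show that $\rho$ decreases in $b$ at fixed $\alpha$ and decreases in $\alpha$ at fixed $b$; but increasing $b$ at fixed $a$ \emph{decreases} $\alpha$, so the two effects have opposite signs and your argument is inconclusive for $\partial\rho/\partial b$ at fixed $a$. The statement is true, but it requires an actual comparison, e.g.\ writing $\psi=\arccos(a/b)$ and reducing $\partial\rho/\partial b<0$ to the elementary inequality $\sin 2\psi<2\psi$, or computing the derivative explicitly as the paper does. A smaller point: you prove the monotonicity of $g(\alpha)=(\pi/2-\alpha)/(\sin\alpha\cos\alpha)$ only for $\alpha\in[\pi/6,\pi/4]$, whereas the lemma claims monotonicity for all admissible $(a,b)$ and is later applied (inside the proof of Lemma~\ref{lem:acute}) to isosceles triangles whose half-apex angle ranges over all of $(0,\pi/2)$; the function $g$ is in fact decreasing on $(0,\pi/2)$, but that has to be stated and verified.
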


\begin{proof}[Proof of Lemma~\ref{lem:isosceles}]
Set $\bar{\lambda}=1+\lambda$. Let $a > \bar{\lambda}$. The fact that in this case $\rho(a,b)$ is a (not necessarily strictly) decreasing function of $a$ and $b$ is proved in Lemma in \cite{Andras}. To prove that this function is strictly decreasing, we may apply a straightforward modification of its proof.

Hence, from now on, we assume that $a \leq \bar{\lambda}$.
Then we have
\begin{equation}\label{eq:density2}
\rho(a,b)= \frac{\bar{\lambda}^2 \arccos \frac{a}{b} -\bar{\lambda}^2 \arccos \frac{a}{\bar{\lambda}} + a \sqrt{\bar{\lambda}^2-a^2}}{a \sqrt{b^2-a^2}} .
\end{equation}
This implies that
\[
\rho'_b(a,b)= \frac{b}{b^2-a^2}\left( \frac{\bar{\lambda}^2}{b^2} - \rho(a,b) \right).
\]
Here, using the integral formula for the area of a function given in polar form, it is easy to see that $\rho(a,b)> \frac{\bar{\lambda}}{b} > \frac{\bar{\lambda}^2}{b^2}$, which yields that $\rho'_b(a,b)< 0$.

On the other hand, by an elementary computation, we obtain
\[
\rho'_a(a,b)= \frac{ab^2\sqrt{\bar{\lambda}^2-a^2}-a\bar{\lambda}^2 \sqrt{b^2-a^2}-\bar{\lambda}^2(b^2-2a^2) \left( \arccos \frac{a}{b} -\arccos \frac{a}{\bar{\lambda}} \right)}{a^2 \left( b^2-a^2\right)^{3/2}} .
\]
Let us use the substitutions $b = \frac{a}{\cos \mu}$ and $\bar{\lambda} = \frac{a}{\cos \nu}$. Then we have $0 \leq \nu < \mu < \frac{\pi}{2}$, and
\[
\rho'_a(\mu,\nu) = \frac{\sin 2\nu - \sin 2\mu + 2 \cos 2\mu (\mu-\nu)}{2a \tan^2 \mu \cos^2 \mu \cos^2 \nu} .
\]
Clearly, the denominator of this fraction is positive. On the other hand, it is easy to check that its numerator is a strictly increasing function
of $\nu$ on the interval $[0,\mu]$, and its value is zero if $\nu = \mu$. Thus, we have $\rho'_a(a,b)>0$ for every value of $a$ and $b$.
\end{proof}

Since $P$ is type 2, $P=\cl \left( \conv \{ v,c_i,c_j\} \setminus \conv \{ v',c_i,c_j\} \right)$, for some $c_i,c_j \in C$, where $|v-c_i| = |v-c_j| \geq \sqrt{2}$,
and $|v'-c_i| = |v'-c_j|$.
Let $T = \conv \{ v,c_i,c_j\}$ and $T' = \conv \{ v',c_i,c_j\}$. Then, by Lemma~\ref{lem:isosceles}, we have $\rho(\F^{\lambda} | T) \leq \rho(\F^{\lambda} | T')$, yielding that $\rho(\F^{\lambda} | P) \leq \rho(\F^{\lambda} | T)$. Furthermore, since the legs of $T$ are at least $\sqrt{2}$, and its base is at least $2$, therefore by Lemma~\ref{lem:isosceles} we have $\rho(\F^{\lambda} | T) \leq \rho(\F^{\lambda} | T_0) = \rho\left( \lambda, T_\frac{\pi}{4}\right)$, where $T_0$ is the isosceles right triangle whose hypothenus is of length $2$.

\emph{Case 2}, $P$ is type 1. In this case the sides of $P$ are of length less than $2\sqrt{2}$, and thus, the unit disks centered at the vertices of $P$ are totally separable. This fact is equivalent to the condition that two heights of $P$ are at least two. Hence, the assertion follows immediately from Lemma~\ref{lem:acute}.

\begin{Lemma}\label{lem:acute}
Let $T=\conv\{p_1,p_2,p_3\}$ be an acute triangle with two heights at least two. Let
\[
\rho(\lambda,T) = \frac{\area\left( T \cap \bigcup_{i=1}^3 (p_i + B^{\lambda}) \right)}{\area(T)}.
\]
Then $\rho(\lambda,T) \leq \rho(\lambda,T_{\alpha})$, for some $\frac{\pi}{6} \leq \alpha \leq \frac{\pi}{4}$.
\end{Lemma}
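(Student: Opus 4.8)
The plan is to reduce an arbitrary acute triangle $T$ with two heights at least $2$ to the one-parameter family $T_\alpha$ by a sequence of monotone deformations, much as Lemma~\ref{lem:isosceles} reduces isosceles triangles. First I would normalize: label the vertices so that the heights from $p_1$ and $p_2$ are the two that are at least $2$, and let $\beta_i$ denote the angle of $T$ at $p_i$. By Lemma~\ref{lem:Molnarisgood}-type reasoning (or directly, since $T$ is acute so it contains its circumcenter), the set $T \cap \bigcup_i (p_i + B^\lambda)$ splits, via the three perpendicular bisectors meeting at the circumcenter, into three pieces, the $i$th piece lying in the sub-quadrilateral of $T$ ``belonging'' to $p_i$; each such piece is a circular sector of radius $\bar\lambda = 1+\lambda$ (truncated by $\bd T$ when $\bar\lambda$ is large) of half-angle depending only on the local geometry at $p_i$. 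This gives an explicit formula $\rho(\lambda,T)=\bigl(\text{sum of three sector-type terms}\bigr)/\area(T)$ in terms of the side lengths, analogous to (\ref{eq:density2}).

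The key monotonicity step is: \emph{fixing the side $[p_1,p_2]$ (hence fixing $\beta_1+\beta_2$ up to the position of $p_3$), moving $p_3$ so as to make the triangle ``more isosceles'' about the axis of $[p_1,p_2]$ increases $\rho$.} Concretely I would argue that if $\beta_1 \neq \beta_2$, reflecting the shorter ``ear'' or continuously symmetrizing decreases $\area(T)$ faster than it decreases the covered area — the same computation underlying Lemma~\ref{lem:isosceles}, applied now to the two unequal legs. The constraint that the heights from $p_1,p_2$ stay $\geq 2$ and that $T$ stays acute is preserved under this symmetrization because the relevant height only increases and the apex angle at $p_3$ only decreases. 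After this step $T$ is isosceles with apex $p_3$; then Lemma~\ref{lem:isosceles} (with $o=p_3$) applies directly: $\rho$ is strictly decreasing in the base half-length $a$ and in the leg length $b$, so I push $a$ down to its minimum $1$ (forced by the height-$\geq 2$ condition: the height from $p_1$ equals the distance from $p_1$ to line $p_2p_3$, and when $a<1$ this is $<2$) — wait, more carefully, the binding constraints are the two heights being $\geq 2$; pushing the triangle to the boundary of the feasible region where \emph{both} of these heights equal $2$ yields exactly $T_\alpha$ for the appropriate $\alpha$, and the acuteness condition confines $\alpha$ to $[\frac{\pi}{6},\frac{\pi}{4}]$ (at $\alpha=\frac\pi4$ the triangle is right; at $\alpha=\frac\pi6$ it is equilateral).

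The main obstacle I anticipate is handling the regime where $\bar\lambda$ is large enough that the soft disks $p_i+B^\lambda$ overlap each other inside $T$ or spill across the far side of $T$, so that the clean ``three disjoint sectors'' decomposition fails and the area formula acquires correction terms. In that regime I would instead bound $\rho(\lambda,T)$ from above by noting $T \cap \bigcup_i(p_i+B^\lambda) \subseteq T$ and comparing with the equilateral or right-isosceles extreme case by a direct geometric containment argument: for fixed inradius-type normalization the covered fraction is largest when the vertices are as close together as the separability/acuteness constraints allow, which is again $T_\alpha$. A second, more technical obstacle is verifying that the symmetrization in the middle paragraph never leaves the acute cone — I expect one must check that the angle at $p_3$ does not exceed $\frac\pi2$, which follows since symmetrizing about the axis of the longest side can only decrease the largest angle, but this needs the bookkeeping that the side $[p_1,p_2]$ one holds fixed is chosen to be a longest side, consistent with the labeling that put the two large heights at $p_1,p_2$.
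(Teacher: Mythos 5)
Your overall plan (symmetrize the acute triangle to an isosceles one, then shrink to $T_\alpha$) coincides with the paper's outline, but the decisive step is missing. In the paper the symmetrization is pinned down exactly: one fixes the circumcircle, takes the vertex opposite the longest side (shortest altitude) and slides it along the circumcircle to the apex position, obtaining $T'$ with the same circumradius $R$. Then, decomposing $T$ by the segments joining the circumcenter $o$ to the vertices, the piece $\conv\{o,p_2,p_3\}$ is literally unchanged, and the whole comparison reduces to the weighted-average inequality $\frac{t_2^{tr}+t_3^{tr}}{t_2+t_3}\leq \frac{t_2'^{tr}}{t_2'}$ for the two pieces adjacent to the moved vertex, where $t_i$ is the area of $\conv\{o,p_j,p_k\}$ and $t_i^{tr}$ the part of it covered by the two soft disks at $p_j,p_k$. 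This is \emph{not} ``the same computation underlying Lemma~\ref{lem:isosceles}'': that lemma gives monotonicity of the covered ratio of a single two-disk piece in $a$ and $b$, whereas here one must compare two unequal pieces with two copies of the averaged piece, and both the covered areas and the area weights change simultaneously. The paper proves this by a separate, nontrivial analytic argument: writing the piece area as $g(x)=\frac{1}{2}R^2\sin 2x$ and the covered part as a function $f(x)$ of the inscribed angle, and showing that $F(\mu,\tau)=2f(\tau)\left(g(\mu)+g(2\tau-\mu)\right)-2g(\tau)\left(f(\mu)+f(2\tau-\mu)\right)$ vanishes at $\mu=\tau$ and is strictly decreasing in $\mu$, via an auxiliary monotone function $h$. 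Your proposal does not specify the deformation (you never fix the circumradius, so no piece of any decomposition is preserved), and it offers no substitute for this concavity-type inequality; as it stands, the core of the lemma is asserted rather than proved. Note also that with your Voronoi-type (perpendicular-bisector) decomposition each piece contains parts of one disk only, which makes it even less clear how Lemma~\ref{lem:isosceles}, a statement about two disks in an isosceles piece with apex at the circumcenter, would drive the symmetrization.

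Two smaller points. First, the overlap regime: the paper splits into $R\leq 1+\lambda$ (dismissed by a direct observation) and $R>1+\lambda$, in which case the circumcenter is uncovered, at most two of the three soft disks meet pairwise, and a preliminary step (moving a soft disk that misses the other two towards the opposite side) ensures the configuration needed later; your ``containment and compare with the extreme case'' sketch for large $\lambda$ is not an argument, though admittedly the paper is also terse there. Second, your final step pushes parameters down using Lemma~\ref{lem:isosceles}, which concerns only two disks, while $\rho(\lambda,T)$ involves three; the paper instead replaces the isosceles $T'$ by a smaller similar copy (the soft disks staying fixed in size), which increases the ratio trivially until the two equal heights become $2$, landing exactly on $T_\alpha$ with $\frac{\pi}{6}\leq\alpha\leq\frac{\pi}{4}$ because the base of $T'$ is not shorter than its legs and its apex angle is at most $\frac{\pi}{2}$.
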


\begin{proof}[Proof of Lemma~\ref{lem:acute}]
First, note that by our assumption, all sidelengths of $T$ are greater than two. Let $R$ be the circumradius of $T$.

If $R \leq 1+\lambda$, then the assertion follows by a simple geometric observation.
Thus, we prove the statement under the condition that $R > 1+\lambda$. This condition implies that the circumcenter of $T$ is not covered by the three soft disks, and also that at most two of the three soft disks intersect.
If there is a soft disk that does not intersect the other two soft disks (i.e. two sides of $T$ are longer than $2 + 2 \lambda$), we may move it towards the opposite side of $T$, and, thus, increase the covering ratio. Hence, if $\rho(\lambda,T)$ is maximal, then it has at most one side longer than $2+2\lambda$.
Let $p_1$ be the vertex of $T$ such that the altitude starting at $p_1$ is a shortest altitude. Then, by the area formula for triangles, if $T$ has a side longer than $2+2\lambda$, then it is $[p_2,p_3]$, and hence, $p_1 + B^{\lambda}$ intersects the other two soft disks.

Let $T' = \conv\{p'_1,p_2,p_3\}$ be an isosceles triangle with circumradius $R$. In the remaining part of the proof we show that $\rho(\lambda,T') \geq \rho(\lambda,T)$.
Observe that $T'$ also has two (equal) heights that are at least two, and also that its base is not shorter than its legs.
Thus, this inequality implies the assertion of the lemma, since, if these two heights of $T'$ are greater than two, then we can replace $T'$ by a smaller similar copy of itself, which clearly increases its covering ratio.

Let $o$ be the circumcenter of $T$, and for $i=1,2,3$, let $t_i = \area (\conv \{ o,p_j,p_k\})$, and 
\[
t^{tr}_i = \area (\left( \left( \left( p_j + (1+\lambda) \BB\right) \cup \left( p_k + (1+\lambda) \BB\right) \right) \cap \conv \{ o,p_j,p_k\} \right),
\]
where $\{i,j,k\} = \{ 1,2,3 \}$.
We define $t'_i$ and $t'^{tr}_i$ similarly for the triangle $T'$.
Note that by Lemma~\ref{lem:isosceles}, we have $\frac{t^{tr}_1}{t_1} = \frac{t'^{tr}_1}{t'_1} \leq \min\{ \frac{t^{tr}_2}{t_2} , \frac{t^{tr}_3}{t_3}\}$.
Since, clearly, $t_2+t_3 \leq 2 t'_2 = 2t'_3$, it is sufficient to prove that $\frac{t^{tr}_2+t^{tr}_3}{t_2+t_3} \leq \frac{t'^{tr}_2}{t'_2}$.

Set $\mu = p_1p_2p_3\angle$, $\nu = p_1p_3p_2 \angle$, and $\bar{\mu}  = \frac{\mu+\nu}{2} = p'_1p_2p_3\angle =
p'_1p_3p_2\angle$, and $f(x)= \frac{\pi}{2}-x-\arccos(R \sin x)+r \sin (x) \sqrt{1-R^2 \sin^2 x}$, and $g(x) = \frac{1}{2} R^2 \sin (2x)$.
Then the desired inequality can be written in the form
\[
\frac{f(\mu)+f(2\tau-\mu)}{g(\mu)+g(2\tau-\mu)} \leq \frac{f(\tau)}{g(\tau)}
\]
for some $0 < \mu \leq \tau \leq 2\tau-\mu < \arcsin \frac{1}{R} < \frac{\pi}{2}$.
Let us define
\[
F(\mu,\tau) = 2f(\tau) \left( g(\mu) + g(2\tau-\mu)\right) - 2 g(\tau) \left( f(\mu)+f(2\tau-\mu) \right).
\]
Note that $F(\tau,\tau) = 0$ for every value of $\tau$. We show that $F(\mu,\tau)$ is a strictly decreasing function of $\mu$ for every value of $\tau$, which readily implies the assertion.
By an elementary computation, we obtain
\[
F'_{\mu}(\mu,\tau) = 2R^2 \sin2\tau \left( 2 f(\tau) \sin (2\tau-2\mu) - R \cos \alpha \sqrt{1-R^2 \sin^2 \mu} + R \cos(2\tau-\mu) \sqrt{1-R^2 \sin^2(2\tau-\mu)}\right).
\]
Using the inequalities $2R^2 \sin 2\tau > 0$, $\sin(2\tau-2\mu) > 0$, $f(\tau) < g(\tau)$, and some trigonometric identities, we obtain that
\[
F'_{\mu}(\mu,\tau) < 2R^2 \sin2\tau \left( h(\mu) - h(2\tau-\mu) \right),
\]
where $h(x)=R^2 \cos^2 x - R \cos x \sqrt{1-R^2 \sin^2 x}$. Observe that
\[
h'(x) = \frac{R \cos x \left( R \cos x - \sqrt{1-R^2 \sin^2 x} \right)^2}{\sqrt{1-R^2 \sin^2 x}} > 0
\]
if $0 < x < \arcsin \frac{1}{R}$. This implies that $h(\mu) < h(2\tau-\mu)$, from which the inequality $F'_{\mu}(\mu,\tau) < 0$ readily follows.
\end{proof}

This completes the proof of Theorem~\ref{thm:soft_packing}.                                  \end{proof}

\begin{figure}[ht]
    \centering
    \begin{minipage}{.45\textwidth}
        \begin{center}
\includegraphics[width=0.9\textwidth]{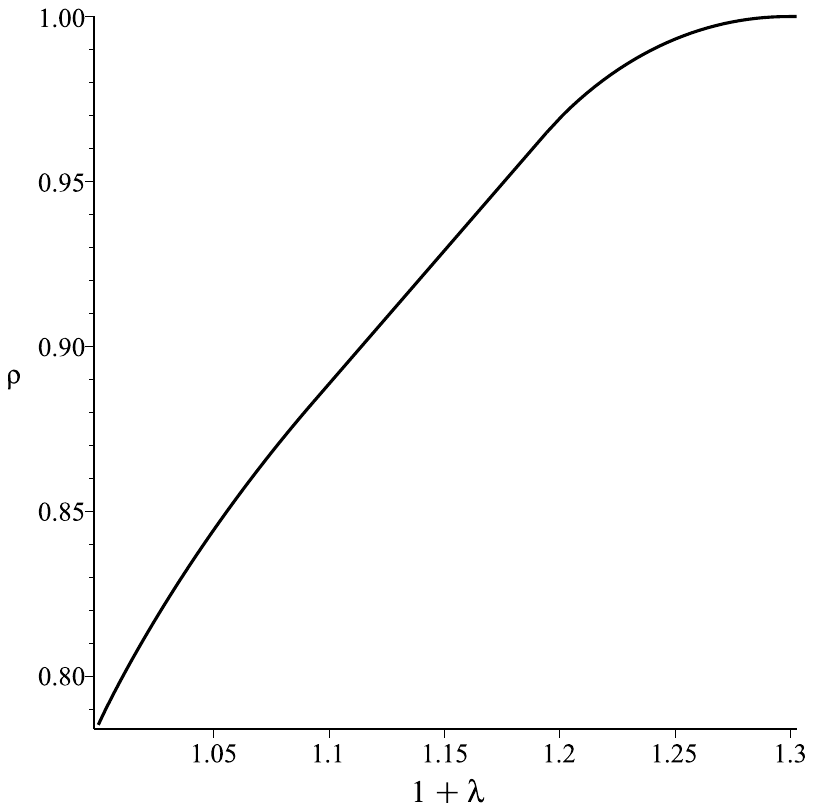}
\end{center}
    \end{minipage}%
    \begin{minipage}{0.45\textwidth}
        \begin{center}
\includegraphics[width=0.9\textwidth]{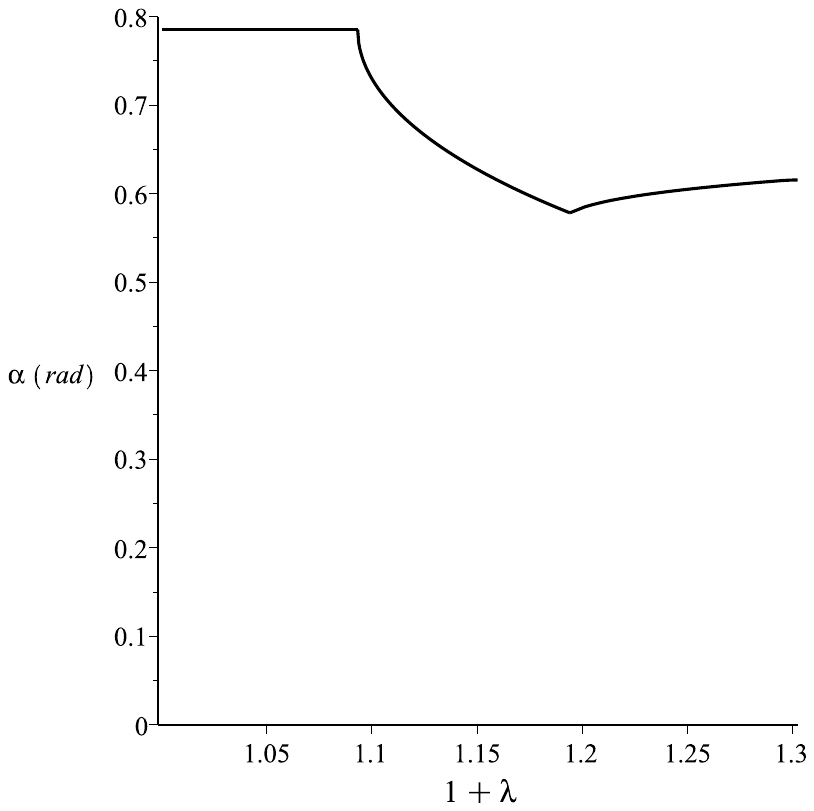}
\end{center}
    \end{minipage}
		\caption{Maximal covering ratios of soft circle packings (on the left), and half-angles of isosceles triangles with maximal covering ratios (on the right)}
		\label{fig:density}
\end{figure}

\begin{Corollary}
An elementary computaion yields that the smallest value $\Lambda$ of $\lambda$ with the property that $\rho(\lambda,T_{\alpha}) = 1$ for some value of $\alpha$ is 
$\Lambda = \frac{3\sqrt{3}}{4} - 1 \approx 0.299$. Thus, by Theorem~\ref{thm:soft_packing}, $\rho_{\lambda,B}^{sep}= 1$ if, and only if $\lambda \geq \frac{3\sqrt{3}}{4} - 1$. Furthermore, $\rho(\Lambda,T_{\alpha}) = 1$ is satisfied if, and only if $\alpha = \arccos \sqrt{\frac{2}{3}} \approx 35.27644^{\circ}$. It is worth rephrasing this result in terms of closeness of packings introduced by L. Fejes T\'oth (\cite{FTL78}) as follows. If $\cal P$ be a packing of unit disks in $\Eu^2$, then let $r_{sep}({\cal P})$ be the supremum of $r>0$ for which there exists a circular disk of radius $r$ having no point in common with the elements of $\cal P$. Then call $r_{sep}({\cal P})$ the {\it closeness} of $\cal P$. Thus, the above mentioned claim can be stated saying that $ r_{sep}({\cal P})\geq  \frac{3\sqrt{3}}{4} - 1$ holds for any totally separable unit disk packing ${\cal P}$ of $\Eu^2$ with equality for a unique totally separable lattice packing.
\end{Corollary}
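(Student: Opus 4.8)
The plan is to turn the equation $\rho(\lambda,T_\alpha)=1$ into a statement about the circumradius of $T_\alpha$, minimise that circumradius over $\alpha$, and then read off everything else from Theorem~\ref{thm:soft_packing}. First I would record the elementary covering fact: $\rho(\lambda,T_\alpha)=1$ holds precisely when the three disks of radius $1+\lambda$ centred at the vertices $p_1,p_2,p_3$ of $T_\alpha$ cover $T_\alpha$. Since the apex angle of $T_\alpha$ is $2\alpha\le\frac\pi2$ and its base angles are $\frac\pi2-\alpha\in[\frac\pi4,\frac\pi3]$, the triangle $T_\alpha$ is acute (right when $\alpha=\frac\pi4$), so its circumcentre lies in $T_\alpha$; the Voronoi regions of $p_1,p_2,p_3$ meet at the circumcentre, and on the part of each region inside $T_\alpha$ the distance to the corresponding vertex is maximised at the circumcentre, where it equals the circumradius $R(T_\alpha)$. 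Hence $\rho(\lambda,T_\alpha)=1$ if and only if $1+\lambda\ge R(T_\alpha)$.

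Next I would compute $R(T_\alpha)$ from the defining data of $T_\alpha$. With base $2a$ and common leg length $b$, the condition that the two altitudes from the base endpoints equal $2$ gives $\area(T_\alpha)=\frac12\,b\cdot2=b$, while also $\area(T_\alpha)=\frac12 b^2\sin2\alpha$; thus $b=\frac2{\sin2\alpha}$ and $2a=2b\sin\alpha=\frac2{\cos\alpha}$, and, the base being opposite the apex angle, the law of sines gives $R(T_\alpha)=\frac{a}{\sin2\alpha}=\frac1{2\sin\alpha\cos^2\alpha}$. Writing $s=\sin\alpha\in[\tfrac12,\tfrac1{\sqrt2}]$, minimising $R(T_\alpha)$ is the same as maximising $s-s^3$; the derivative $1-3s^2$ vanishes only at $s=\tfrac1{\sqrt3}$, which lies in the interior of the interval and is the unique maximiser because $s-s^3$ is strictly concave there. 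So $\min_\alpha R(T_\alpha)=\big(2\cdot\tfrac2{3\sqrt3}\big)^{-1}=\tfrac{3\sqrt3}4$, attained only at $\alpha=\arcsin\tfrac1{\sqrt3}=\arccos\sqrt{\tfrac23}$. Consequently the least $\lambda$ for which $\rho(\lambda,T_\alpha)=1$ for some $\alpha$ is $\Lambda=\tfrac{3\sqrt3}4-1$, and at $\lambda=\Lambda$ the equality $\rho(\Lambda,T_\alpha)=1$ forces $R(T_\alpha)\le 1+\Lambda=\tfrac{3\sqrt3}4$, hence $\alpha=\arccos\sqrt{\tfrac23}$.

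The second assertion then follows from Theorem~\ref{thm:soft_packing}: for each $\lambda$ there is an $\alpha\in[\tfrac\pi6,\tfrac\pi4]$ with $\rho_{\lambda,B}^{sep}=\rho(\lambda,T_\alpha)$, and since each $T_\alpha$ also generates an admissible totally separable lattice packing of soft disks whose covering ratio equals $\rho(\lambda,T_\alpha)$, this value is the maximum over $\alpha$; as $\rho\le1$ always, $\rho_{\lambda,B}^{sep}=1$ if and only if $\rho(\lambda,T_\alpha)=1$ for some $\alpha$, i.e. if and only if $\lambda\ge\Lambda$. For the closeness reformulation, observe that a disk of radius $r$ is disjoint from every member $c_i+B$ of a unit disk packing $\mathcal{P}=\{c_i+B\}$ exactly when its centre lies outside $\bigcup_i\big(c_i+(1+r)B\big)$; so if $r_{sep}(\mathcal{P})<r$, pick $r'$ with $r_{sep}(\mathcal{P})<r'<r$: no disk of radius $r'$ avoids $\mathcal{P}$, hence $\bigcup_i(c_i+(1+r')B)=\Eu^2$, so $\bigcup_i(c_i+B^r)=\Eu^2$ and $\rho(\F^r)=1$, forcing $\rho_{r,B}^{sep}=1$ and therefore $r\ge\Lambda$. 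Letting $r\downarrow\Lambda$ yields $r_{sep}(\mathcal{P})\ge\Lambda$ for every totally separable unit disk packing. Equality is realised by the lattice packing generated by $p_2-p_1$ and $p_3-p_1$ with $T_\alpha=T_{\arccos\sqrt{2/3}}$: its Delaunay triangles are congruent copies of $T_{\arccos\sqrt{2/3}}$, each with circumradius $\tfrac{3\sqrt3}4=1+\Lambda$, so every circumcentre is the centre of an empty open disk of radius $\Lambda$ and of no larger empty disk.

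I expect the genuinely non-routine step to be the uniqueness of this extremal packing, which needs a re-inspection of the proof of Theorem~\ref{thm:soft_packing}. If $r_{sep}(\mathcal{P})=\Lambda$, then the refined Moln\'ar tessellation $M'$ of the centre set $C$ must consist of cells on which the soft packing $\F^\Lambda$ already covers with ratio $1$; but by the strict monotonicity in Lemmas~\ref{lem:isosceles} and~\ref{lem:acute}, together with the computation above, at soft parameter $\Lambda$ a type-2 cell or a non-extremal type-1 acute triangle has covering ratio strictly below $1$, so every cell of $M'$ must be a copy of $T_{\arccos\sqrt{2/3}}$. Propagating this rigidity to adjacent cells pins $C$ down as exactly the triangular lattice above, giving the asserted uniqueness; carefully carrying out this equality analysis — and handling the boundary behaviour of the limsup defining $r_{sep}$ — is the main obstacle, everything else being the elementary computation the corollary refers to.
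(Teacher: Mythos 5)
Your ``elementary computation'' is correct and is essentially the one the paper has in mind (the paper gives no explicit proof of this Corollary): since $T_{\alpha}$ is acute or right, the farthest point of $T_{\alpha}$ from its vertex set is its circumcenter, so $\rho(\lambda,T_{\alpha})=1$ iff $1+\lambda\geq R(T_{\alpha})=\frac{1}{2\sin\alpha\cos^{2}\alpha}$; minimizing over $\alpha$ gives the unique minimizer $\alpha=\arccos\sqrt{2/3}$ with minimum $\frac{3\sqrt{3}}{4}$, and your deductions of $\rho^{sep}_{\lambda,B}=1\Leftrightarrow\lambda\geq\Lambda$ from Theorem~\ref{thm:soft_packing}, of the lower bound $r_{sep}(\mathcal{P})\geq\Lambda$, and of the attainment of equality by the lattice generated by $p_{2}-p_{1},p_{3}-p_{1}$ for $T_{\arccos\sqrt{2/3}}$ are all sound (up to the harmless $r$ versus $r'$ slip in the closeness argument).

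The genuine gap is the final clause of the Corollary: equality for a \emph{unique} totally separable lattice packing. You explicitly leave this as a sketch (``propagating this rigidity \dots is the main obstacle''), so the claim is not proved; moreover the route you sketch aims at a stronger statement than is asserted, namely uniqueness among \emph{all} totally separable packings, and there the argument would need total separability in an essential way: knowing that every $M'$-cell is congruent to $T_{\arccos\sqrt{2/3}}$ does not by itself pin down the point set, since a priori the two ``base'' edges meeting at a vertex of such a triangulation need not be collinear, so the propagation step is not automatic. For the statement actually claimed, a short direct argument closes the gap without any $M'$-machinery. If a totally separable lattice packing has $r_{sep}=\Lambda$, then (as you note) every point of the plane lies within $1+\Lambda$ of the lattice, so every Delaunay triangle of the lattice has circumradius $R\leq\frac{3\sqrt{3}}{4}<\sqrt{2}$; its sides are then shorter than $2\sqrt{2}$, so total separability of the corresponding triple of unit disks is equivalent to two of its heights being at least $2$, i.e.\ (with angles $A,B,C$, the exceptional height at the vertex with angle $A$) $R\sin A\sin B\geq 1$ and $R\sin A\sin C\geq 1$. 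Since $\max_{A+B+C=\pi}\sin A\,\min\{\sin B,\sin C\}=\frac{4}{3\sqrt{3}}$, attained only at $B=C=\arccos\frac{1}{\sqrt{3}}$, this forces $R\geq\frac{3\sqrt{3}}{4}$ with equality only for the triangle $T_{\arccos\sqrt{2/3}}$ with both heights exactly $2$; hence all Delaunay triangles are congruent to $T_{\arccos\sqrt{2/3}}$, and a planar lattice with this Delaunay triangle is unique up to rigid motions, namely the one generated by $p_{2}-p_{1}$ and $p_{3}-p_{1}$. (One also needs, as in your construction, that some circumradius equals $1+\Lambda$, which follows because otherwise the finitely many circumradius values of a lattice would give $r_{sep}<\Lambda$.) Incorporating this, or honestly completing your rigidity argument, is what is missing from the proposal.
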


\begin{Remark}\label{rem:max_density}
By Theorem~\ref{thm:soft_packing}, we have that $\rho_{\lambda,\BB}^{sep} = \rho(\lambda,T_{\alpha})$ for some $\frac{\pi}{6} \leq \alpha \leq \frac{\pi}{4}$.
Unfortunately, in general the value(s) of $\alpha$ where $\rho(\lambda,T_{\alpha})$ is maximal for a fixed value of $\lambda$ can be computed only numerically. The graph on the left in Figure~\ref{fig:density} shows the maximal values of $\rho(\lambda,T_{\alpha})$ as a function of $\lambda$, and the one on the right fshows values of $\alpha$, belonging to these covering ratios. Let $t_0$ denote the unique solution of the equation $\sin 2t = \frac{\pi}{2}-2t$, and set $\lambda_1 = \frac{1}{\cos t_0}-1 \approx 0.093$ and $\lambda_2 = \frac{2 \cos t_0}{\sqrt{4\cos^2 t_0-1}} - 1 \approx 0.194$. If $0 < \lambda \leq \lambda_2$, then there is a unique optimal value of $\alpha$, and in the corresponding triangle $T_{\alpha}$ the two soft disks centered at the vertices of the base do not overlap. Within this case, if $ < \lambda \leq \lambda_1$, then the covering ratio is maximal for $\alpha = \frac{\pi}{4}$. If $\lambda_1 \leq \lambda \leq \lambda_2$, then the optimal value of $\alpha$ is $\alpha = \frac{1}{2} \arcsin \left( (1+\lambda) \cos t_ 0 \right)$, and the maximal covering ratio is the linear function $ (1+\lambda) \left( \frac{\pi}{4 \cos t_0} - \frac{t_0}{\cos t_0} + \sin t_ 0 \right)$. If $\lambda > \lambda_2$, then the two soft circles centered at the endpoints of the base overlap, we could express the maximal values of the covering ratios only numerically. 
\end{Remark}

\bigskip

\section{Appendix}

We cannot resist to raise the following questions motivated by the theorems proved in this paper.

\begin{Problem}
Characterize the case of equality in (\ref{eq:Oler}) of Theorem~\ref{thm:Oler}.
\end{Problem}

\begin{Definition}
For any $o$-symmetric convex domain $K$ in $\Eu^2$, $n>1$, and $\lambda\ge 0$ let

\[
a_{sep}(K,n,\lambda) = \min \left\{\area \left( \bigcup_{i=1}^n (c_i+ K^{\lambda})\right) : \{c_1 + K^{\lambda}, \ldots, c_n + K^{\lambda} \} \in \P^{sep}_{K,n,\lambda} \right\}.
\]
\end{Definition}

\begin{Problem}\label{soft-Betke}
Compute $a_{sep}(K,n,\lambda)$ for given $o$-symmetric convex domain $K$, $n>1$, and $\lambda\ge 0$.
\end{Problem}

If $K$ is an $o$-symmetric convex domain in $\Eu^2$, $n>1$, and $\{c_1,c_2,\dots , c_n\}\subseteq \Eu^2$, then it is easy to see that
\begin{equation}\label{large}
\area\left( \bigcup_{i=1}^n (c_i + K^{\lambda})\right) = (1+\lambda)^2 \area(K)+2\lambda A(\conv \{c_1,c_2,\ldots,c_n \}, K)+o(1+\lambda).
\end{equation}
for $(1+\lambda)\to+\infty$.

Based on (\ref{large}) the following is immediate from $(\ref{lem:Radon}.2)$ of Lemma~\ref{lem:Radon}.

\begin{Remark}
\[
a_{sep}(K,n,\lambda) \geq
\]
\[
(1+\lambda)^2 \area(K) + \lambda \frac{\area(\square(K))}{4} \min_{ \{c_i + K : i=1,2,\ldots,n \} \in \P^{sep}_{K,n,0}} \left\{ M_{K} (\bd (\conv \{ c_1,\ldots, c_n\}))\right\}
 + o(1+\lambda)
\]
for $(1+\lambda)\to+\infty$.
\end{Remark}

Thus, the problem of lower bounding $a_{sep}(K,n,\lambda)$ for large $\lambda$ leads us to

\begin{Problem}
For a given $o$-symmetric convex domain $K$ in $\Eu^2$ and given $n > 1$ compute 
\[
 \min_{ \{c_i + K : i=1,2,\ldots,n \} \in \P^{sep}_{K,n,0}} \left\{ M_{K} (\bd (\conv \{ c_1,\ldots, c_n\}))\right\}=
 \]
 \[
 \min_{ \{c_i + K : i=1,2,\ldots,n \} \in \P^{sep}_{K,n,0}} \left\{ M_{K} \left(\bd \left(  \conv \left( \bigcup_{i=1}^n (c_i+K) \right)  \right)\right)\right\}-M_K(\bd K).
 \]
\end{Problem}

Recall that the {\it maximum separable contact number} $c_{sep}(K, n, 2)$ is the largest contact number of totally separable packings of $n$ translates of a given ($o$-symmetric) convex domain $K$ in $\Eu^2$ for given $n>1$, where the contact number of a packing is simply the number of touching pairs among the packing elements.

\begin{Remark}
Let $n>1$ be given and let $K$ be an $o$-symmetric convex domain in $\Eu^2$. Then it is easy to see that there exists $\lambda(K, n)>0$  with the following property: for any $\lambda$ with $0\leq\lambda\leq \lambda(K, n)$ and any $\{ c_1 + K^{\lambda}, c_2 + K^{\lambda}, \ldots, c_n + K^{\lambda} \} \in \P^{sep}_{K,n,\lambda}$ the number of pairs
$\{ c_i + K^{\lambda}, c_j + K^{\lambda}\}$ with $(c_i + K^{\lambda})\cap (c_j + K^{\lambda})\neq\emptyset$ is at most $c_{sep}(K, n, 2)$. Furthermore, if $K$ is smooth, then there exits $\lambda^*(K, n)>0$ with $\lambda^*(K, n)\leq \lambda(K, n)$ such that no three of the sets $\{ c_1 + K^{\lambda}, c_2 + K^{\lambda}, \ldots, c_n + K^{\lambda} \}$
intersect. As a result it is not hard to see that
\[
n\area(K^\lambda)- c_{sep}(K, n, 2)A_{\rm max}(K, \lambda)\leq a_{sep}(K,n,\lambda) \leq n\area (K^\lambda)- c_{sep}(K, n, 2)A_{min}(K, \lambda),
\]
where $A_{\rm min}(K, \lambda)=\min\{\area \left((c_i + K^{\lambda})\cap (c_j + K^{\lambda})\right)\ |\ |c_i-c_j|_K=2\}$ 
\newline
(resp., $A_{\rm max}(K, \lambda)=$ $\max\{\area \left((c_i + K^{\lambda})\cap (c_j + K^{\lambda})\right)\ |\ |c_i-c_j|_K=2\}$).
\end{Remark}

Thus, the problem of bounding $a_{sep}(K,n,\lambda)$ for small $\lambda$ leads us to

\begin{Problem}\label{separable-contact-numbers}
Let $K$ be a (smooth) convex domain in $\Eu^2$ and $n>1$. Then compute 
\[
c_{sep}(K,n,2).
\]
\end{Problem}

In connection with Problem~\ref{separable-contact-numbers} the following result was proved in \cite{BeKhOl}.

\begin{Theorem}
\item{(A)} $c_{sep}(K,n,2) = \left\lfloor 2n - 2\sqrt{n}\right\rfloor$, for any smooth strictly convex domain $K$ in $\Eu^2$.
\item{(B)} Let $R$ be a smooth Radon domain and let $n=\ell(\ell+\epsilon)+k\ge 4$ be the unique decomposition of a positive integer $n$ such that $k$, $\ell$ and $\epsilon$ are integers satisfying $\epsilon\in \{0,1\}$ and $0\le k< \ell+\epsilon$. Suppose that $\cal{P}$ is a totally separable packing of $n$ translates of $R$ with $c_{sep}(R,n,2) =\lfloor{2n-2\sqrt{n}}\rfloor$ contacts. If $k\ne 1$, then $\cal{P}$ is a finite lattice packing lying on an Auerbach lattice of $R$, while if $k=1$, then all but at most one translate in $\cal{P}$ form a lattice packing on an Auerbach lattice of $R$.
\end{Theorem}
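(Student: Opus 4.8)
The plan is to pass to the \emph{contact graph} of the packing and then to a discrete extremal problem about grid-like planar graphs. By Remark~\ref{no-symmetry} (applied with $\frac12(K-K)$ in place of $K$) we may assume $K$ is $o$-symmetric, since central symmetrization preserves smoothness, strict convexity, total separability, and the relation that $c_i+K$ and $c_j+K$ touch, which now reads $|c_i-c_j|_K=2$. Let $G$ be the graph on $X=\{c_1,\dots,c_n\}$ with $c_ic_j$ an edge exactly when $c_i+K$ and $c_j+K$ touch. I would first establish the following properties of $G$, each using total separability together with smoothness and strict convexity: (a) drawn with the straight segments $[c_i,c_j]$, the graph $G$ is planar, because such a segment lies in $(c_i+K)\cup(c_j+K)$ while the line separating any two members avoids all interiors; (b) $G$ is triangle-free and has maximum degree at most $4$ --- if $c_0+K$ touches $c_i+K$, then their common tangent at the unique contact point must avoid the interior of $c_j+K$ for every other neighbour $c_j$ of $c_0$, and writing $q_i=\frac12(c_i-c_0)\in\bd K$ and letting $u_i$ denote the outer unit normal of $K$ at $q_i$, this forces $u_i\cdot q_j\le 0$ for all $i\neq j$; an elementary angular argument then caps the number of neighbours of $c_0$ at $4$, and $u_i\cdot q_i=h_K(u_i)>0$ excludes a third mutually touching translate; (c) $G$ contains no $K_{2,3}$, because two translates of the curve $2\,\bd K$ meet in at most two points (the chord length of $K$ in a fixed direction is a concave, hence unimodal, function of position, so it attains any value at most twice); (d) every cycle of $G$ has even length, by a further separating-line argument.

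The heart of the upper bound in (A) is to show that (a)--(d) force $G$, as an abstract graph, to be isomorphic to a subgraph of the integer grid $\Z^2$: the vertices of each component receive consistent integer ``column'' and ``row'' coordinates, with edges joining grid neighbours. Once this is known, if the vertices of $G$ meet $p$ columns and $q$ rows, there are at most $n-p$ edges inside columns and $n-q$ inside rows, whence $|E(G)|\le 2n-p-q\le 2n-2\sqrt{pq}\le 2n-2\sqrt n$, and since $|E(G)|$ is an integer, $|E(G)|\le\lfloor 2n-2\sqrt n\rfloor$; for disconnected $G$ one combines the component bounds using $\sum_i\lfloor x_i\rfloor\le\lfloor\sum_i x_i\rfloor$ and $\sum_i\sqrt{n_i}\ge\sqrt{\sum_i n_i}$. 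For the matching construction, Auerbach's theorem provides $q_1,q_2\in\bd K$ whose supporting outer normals $u_1,u_2$ satisfy $u_1\cdot q_2=u_2\cdot q_1=0$; then $q_1\pm q_2$ lie on the supporting line of $K$ at $q_1$, so $\Lambda=\Z(2q_1)+\Z(2q_2)$ is a packing lattice, and the two families of lines in the directions of $q_2$ and $q_1$ through the contact points separate every pair of translates, so $\{v+K:v\in\Lambda\}$ is totally separable (an \emph{Auerbach lattice packing}). Selecting the $n$ translates whose centres form an $\ell\times(\ell+\epsilon)$ block of $\Lambda$ augmented by a partial extra row of $k$ cells, with $n=\ell(\ell+\epsilon)+k$ as in the statement, yields a totally separable packing whose contact graph is the corresponding lattice ``animal'', realizing $\lfloor 2n-2\sqrt n\rfloor$ contacts; this proves (A).

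For (B) I would run the equality discussion. Equality in the chain above forces $G$ connected, $\{p,q\}=\{\lfloor\sqrt n\rfloor,\lceil\sqrt n\rceil\}$, every column and row of $G$ full, and --- by the uniqueness half of the extremal-animal theorem --- $G$ equal to an $\ell\times(\ell+\epsilon)$ grid with a partial extra row of $k$ cells when $k\neq 1$, and to an $\ell\times(\ell+\epsilon)$ grid with one pendant vertex when $k=1$. It then remains to read off the geometry: a $4$-cycle of $G$ bounding an empty quadrilateral with vertices $c,\,c+2q_1,\,c+2q_1+2q_2,\,c+2q_2$, together with the diagonal non-overlap $|2q_1\pm 2q_2|_K\ge 2$ and the separating-line conditions of (b), forces $u_1\cdot q_2=u_2\cdot q_1=0$, i.e.\ $q_1,q_2$ span a fundamental cell of an Auerbach lattice; two edge-adjacent such cells must share the same $q_1,q_2$, so the whole grid is a lattice packing on a single Auerbach lattice of $R$ (all but one translate, when $k=1$). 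The hypothesis that $R$ is a smooth \emph{Radon} domain is precisely what guarantees that the needed conjugate semi-diameter pairs $(q_1,q_2)$ exist for every direction that can arise along the boundary of the configuration and that no non-lattice extremal quadrilateral can occur.

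The main obstacle is the grid embedding step: promoting the local combinatorial and convex-geometric constraints (a)--(d) into the global statement $G\subseteq\Z^2$, rather than merely extracting the weaker bound $|E(G)|\le 2n-4$ from planarity and triangle-freeness; and, for (B), exploiting the Radon property to eliminate non-lattice extremal configurations. Both steps require dovetailing the extremal theory of lattice animals with the rigidity of contacts among translates of a strictly convex domain.
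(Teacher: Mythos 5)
The paper you are working from does not actually prove this theorem: it is quoted verbatim from \cite{BeKhOl}, so there is no internal proof to compare with, and your proposal must be judged on its own. Judged that way, its central step fails. You claim that properties (a)--(d) --- planarity, triangle-freeness, maximum degree $4$, no $K_{2,3}$, even cycles --- force the contact graph, as an abstract graph, to be a subgraph of the integer grid $\Z^2$, and you derive the bound $\lfloor 2n-2\sqrt n\rfloor$ from that alone. This implication is false: the $3$-cube graph $Q_3$ is planar, bipartite, $3$-regular, triangle-free and contains no $K_{2,3}$, yet it has $12$ edges on $8$ vertices, whereas any subgraph of $\Z^2$ on $8$ vertices has at most $\lfloor 16-2\sqrt 8\rfloor =10$ edges; so a graph can satisfy all of (a)--(d) and embed in no grid. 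Hence the upper bound in (A) does not follow from the combinatorial facts you isolate; one needs genuinely geometric input about how the (at most four) contact directions around each translate fit together globally, which is where the actual argument of \cite{BeKhOl} does its work. You flag the grid-embedding step yourself as the main obstacle, and that is exactly right: it is not a technical detail to be supplied later but the entire content of the theorem, and the purely combinatorial route you propose to it is blocked.

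Two further gaps. Condition (d) is asserted ``by a further separating-line argument'' but never proved, and bipartiteness of the contact graph of an arbitrary totally separable packing of a smooth strictly convex domain is not obvious; since your grid claim leans on it, it would need a genuine proof. And part (B) inherits the main gap and adds new ones: the ``uniqueness half of the extremal-animal theorem'' you invoke does not hold in the strong form you use (extremal grid subgraphs with $\lfloor 2n-2\sqrt n\rfloor$ edges are in general not unique), and the passage from one empty combinatorial $4$-cycle to the claim that edge-adjacent cells are spanned by one and the same Auerbach pair --- i.e.\ the rigidity forcing the whole configuration onto a single Auerbach lattice, and the precise role of the Radon hypothesis --- is only gestured at. The lower-bound construction via an Auerbach pair is fine, but it is the easy half of (A).
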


\begin{Definition}
Let $\F = \{ c_i + K : c_i\in\Eu^2\ {\text for}\ i \in \N\}$ be a totally separable packing (resp., lattice packing) of translates of the $o$-symmetric convex domain $K$ in $\Eu^2$. Then $\F^{\lambda} = \{ c_i + (1+\lambda)K : i \in \N\}$
is called a \emph{totally separable soft packing} (resp.,  \emph{totally separable soft lattice packing}) of translates of the \emph{soft convex domain}  $K^{\lambda}=(1+\lambda)K$ with \emph{soft parameter} $\lambda > 0$.
In this case the \emph{(upper) covering ratio} of the soft packing $\F^{\lambda}$ is defined as 
\[
\rho(\F^\lambda) = \limsup_{r \to \infty} \frac{\area \left( r K \cap \bigcup_{i\in\N } (c_i+K^{\lambda})\right)}{\area (r K)}.
\]
We denote by $\rho_{\lambda, K}^{sep}$ (respectively, $\rho_{\lambda, K}^{sep, lattice}$) the supremum of the (upper) covering ratios over the family of totally separable soft packings (respectively, of totally separable soft lattice packings) of translates of the soft convex domain $K^{\lambda}$ with soft parameter $\lambda > 0$.
\end{Definition}

\begin{Problem}
Let $K$ be an $o$-symmetric convex domain in $\Eu^2$. Then prove or disprove that $\rho_{\lambda,K}^{sep}=\rho_{\lambda,K}^{sep,lattice} $
holds for every $\lambda>0$.
\end{Problem}

\begin{Definition}
If $\cal P$ is a totally separable packing of translates of an $o$-symmetric convex domain $K$ in $\Eu^2$, then let $r_{sep}({\cal P})$ be the supremum of $r>0$ for which there exists a translate of $rK$ having no point in common with the elements of $\cal P$. Then call $r_{sep}({\cal P})$ the {\it closeness} of $\cal P$ and set 
\[
\underline{r}_{sep}(K)=\inf\{r_{sep}({\cal P})\ :\ {\cal P}\ \text{is a totally separable packing of translates of}\ K\ {\text in}\ \Eu^2\}.
\]
\end{Definition}

\begin{Problem}
Prove or disprove that for any $o$-symmetric convex domain $K$ of $\Eu^2$  we have $\underline{r}_{sep}(K)=r_{sep}({\cal P})$ for some totally separable lattice packing $\cal P$ of $K$.
\end{Problem}

\vskip1.0cm

\noindent K\'aroly Bezdek \\
\small{Department of Mathematics and Statistics, University of Calgary, Calgary, Canada}\\
\small{Department of Mathematics, University of Pannonia, Veszpr\'em, Hungary}\\
\small{\texttt{bezdek@math.ucalgary.ca}}

\bigskip

\noindent and

\bigskip

\noindent Zsolt L\'angi \\
\small{MTA-BME Morphodynamics Research Group and Department of Geometry}\\ 
\small{Budapest University of Technology and Economics, Budapest, Hungary}\\
\small{\texttt{zlangi@math.bme.hu}}


\begin{thebibliography}{99}

\bibitem{BetkeHenkWills} U. Betke, M. Henk, and J. M. Wills, \emph{Finite and infinite packings}, J. reine und angew. Math. (Crelles Journal) \textbf{453} (2009), 165-192.


\bibitem{Andras} A. Bezdek, \emph{Locally separable circle packings}, Studia Sci. Math. Hungar. \textbf{18} (1983), 371-375.


\bibitem{BeKhOl} K. Bezdek, M. Khan, and M. Oliwa, \emph{Hadwiger and contact numbers of totally separable domains and their crystallization}, arXiv:1703.08568v1 [math.MG] (24 Mar 2017), 1-27.

\bibitem{BezdekLangi} K. Bezdek and Z. L\'angi, \emph{Density bounds for outer parallel domains of unit ball packings}, Proc. Steklov Inst. Math. \textbf{288} (2015), 209-225.

\bibitem{BoRu}K. J. B\"or\"oczky and I. Ruzsa, \emph{Note on an inequality of Wegner}, Discrete Comput. Geom. \textbf{37} (2007), 245-249.

\bibitem{Fary} I. F\'ary, \emph{Sur la densit\'e des r\'eseaux de domaines convexes (French)}, Bull Soc. Math. France \textbf{78} (1950), 152-161.

\bibitem{FeFe} G. Fejes T\'oth and L. Fejes T\'oth, \emph{On totally separable domains}, Acta Math. Acad. Sci. Hung. \textbf{24/1-2} (1973), 229-232.

\bibitem{FTL50} L. Fejes T\'oth, \emph{Some packing and covering theorems}, Acta Sci. Math. Szeged \textbf{12/A} (1950), 62-67.

\bibitem{FTL78} L. Fejes T\'oth, \emph{Remarks on the closest packing of convex discs}, Comment. Math. Helvetici \textbf{53} (1978), 536-541.


\bibitem{FTL83} L. Fejes T\'oth, \emph{On the densest packing of convex discs}, Mathematika \textbf{30/1} (1983), 1-3.

\bibitem{GHorvathLangi} \'A. G. Horv\'ath and Z. L\'angi, \emph{On the volume of the convex hull of two convex bodies}, Monatsh. Math. \textbf{174} (2014), 219-229.

\bibitem{Ke}G. Kert\'esz, \emph{On totally separable packings of equal balls}, Acta Math. Acad. Sci. Hung. \textbf{51/3-4} (1988), 363-364.

\bibitem{Molnar} J. Moln\'ar, \emph{On the $\rho$-system of unit circles}, Ann. Univ. Sci. Budapest. E\"otv\"os Sect. Math. \textbf{20} (1977), 195–203.


\bibitem{Oler} N. Oler, \emph{An inequality in the geometry of numbers}, Acta Math. \textbf{105} (1961), 19-48.

\bibitem{R}C. A. Rogers, \emph{The closest packing of convex two-dimensional domains}, Acta Math. \textbf{86} (1951), 309-321.

\bibitem{Z} H.  J. Zassenhaus, \emph{Modern developments in the geometry of numbers}, Bull. Amer. Math. Soc. \textbf{67/5} (1961), 427-439. 

\end{thebibliography}
\end{document}